\theoremstyle{plain}
\newtheorem{theorem}{Theorem}[section]
\newtheorem{lemma}[theorem]{Lemma}
\newtheorem{corollary}[theorem]{Corollary}
\theoremstyle{definition}
\theoremstyle{remark}
\newtheorem*{remark*}{Remark}
\numberwithin{equation}{section}
\newcommand\D{{\mathcal D}}
\newcommand\A{{\mathcal A}}
\newcommand\F{{\mathcal F}}
\newcommand\U{{\mathcal U}}
\newcommand\V{{\mathcal V}}
\newcommand\I{{\mathcal I}}
\newcommand\CC{{\mathbb C}}
\newcommand\QQ{{\mathbb Q}}
\newcommand\RR{{\mathbb R}}
\newcommand\ZZ{{\mathbb Z}}
\newcommand\NN{{\mathbb N}}
\newcommand\PP{{\mathbb P}}
\newcommand\II{{\mathbb I}}
\newcommand\UU{{\mathbb U}}
\newcommand{\Cas}{\mathrm {Cas\,}}
\newcommand\p{\mbox{$\mathfrak{p}$}}
\newcommand\rank{\operatorname{rank}}
\newcommand\Sh{\mbox{\Large $\mathfrak {s}$}}
\numberwithin{equation}{section}
\newcommand{\dosfilas}[2]{
  \ldelim[{2}{2mm}& #1 &\rdelim]{2}{2mm} \\
  & #2 & &  & &
}
\newcommand*\pFqskip{8mu}
\newcommand*\pFq{\begingroup
        \catcode`\,\active
        \def ,{\mskip\pFqskip\relax}%
        \dopFq
}
\def\dopFq#1#2#3#4#5{%
        {}_{#1}F_{#2}\biggl(\genfrac..{0pt}{}{#3}{#4};#5\biggr)%
        \endgroup
}
\title[]
{Constructing bispectral dual Hahn polynomials}
\author{Antonio J. Dur\'an}
\address{A. J. Dur\'an\\
Departamento de An\'{a}lisis Matem\'{a}tico \\
Universidad de Sevilla \\
Apdo (P. O. BOX) 1160\\
41080 Sevilla. Spain.}
\email{duran@us.es}
\thanks{Partially supported by MTM2012-36732-C03-03 (Ministerio de Economía y Competitividad),
FQM-262, FQM-4643, FQM-7276 (Junta de Andalucía) and Feder Funds (European
Union).}
\subjclass{33C45, 33E30, 42C05}
\keywords{Orthogonal polynomials. Difference operators and equations.
Dual Hahn polynomials. Krall polynomials.}
   \date{}
\begin{document}
   \maketitle

   \begin{abstract}
Using the concept of $\mathcal{D}$-operator and the classical discrete family of dual Hahn, we construct orthogonal polynomials $(q_n)_n$
which are also eigenfunctions of higher order difference operators.
\end{abstract}

\section{Introduction}
The most important families of orthogonal polynomials are the classical, classical discrete or $q$-classical families (Askey scheme and its $q$-analogue). Besides the orthogonality, they are also eigenfunctions of a second order differential, difference or $q$-difference operator, respectively. That is the case of the dual Hahn polynomials, which are eigenfunctions of a second order difference operator acting in a quadratic lattice.

The  issue of orthogonal polynomials which are also eigenfunctions of a higher order differential operator was raised by H.L. Krall in 1939, when he obtained a complete classification for the case of a differential operator of order four (\cite{Kr2}). After his pioneer work, orthogonal polynomials which are also eigenfunctions of higher order differential operators are usually called Krall polynomials. This terminology can be extended for finite order difference and $q$-difference operators. Krall polynomials are also called bispectral, following the terminology introduced by Duistermaat and Gr\"unbaum (\cite{DG}; see also \cite{GrH1}, \cite{GrH3}).

Regarding Krall polynomials, there are important differences depending whether one considers differential or difference operators.
Indeed, roughly speaking, one can construct Krall polynomials $q_n(x)$, $n\ge 0$,  by using the Laguerre $x^\alpha e^{-x}$, or Jacobi weights $(1-x)^\alpha(1+x)^\beta$, assuming that one or two of the parameters $\alpha$ and $\beta$ are nonnegative integers and adding a linear combination of Dirac deltas and their derivatives at the endpoints of the orthogonality interval (see, for instance, \cite{Kr2}, \cite{koekoe}, \cite{koe}, \cite{koekoe2}, \cite{L1}, \cite{L2}, \cite{GrH1}, \cite{GrHH}, \cite{GrY}, \cite{Plamen1}, \cite{Plamen2}, \cite{Zh}). This procedure of adding deltas seems not to work if we want to construct Krall discrete polynomials from the classical discrete measures of Charlier, Meixner, Krawtchouk and Hahn (see the papers \cite{BH} and \cite{BK} by Bavinck, van Haeringen and Koekoek answering, in the negative,
a question posed by R. Askey in 1991 (see p. 418 of \cite{BGR})).

As it has been shown recently by the author, instead of adding deltas, Krall discrete polynomials can be constructed by multiplying the classical discrete weights by certain polynomials (see \cite{du0}, \cite{ddI0}).
The kind of transformation which consists in multiplying a measure $\mu$ by a polynomial $r$ is called a Christoffel transform.
It has a long tradition in the context of orthogonal polynomials: it goes back a century and a half ago when
E.B. Christoffel (see \cite{Chr} and also \cite{Sz}) studied it for the particular case $r(x)=x$.

The purpose of this paper is to show that this procedure also works for constructing Krall polynomials from the dual Hahn orthogonal polynomials. Dual-Hahn polynomials $(R_n^{\alpha,\beta,N})_n$ are eigenfunctions of a second order difference operator acting in the quadratic lattice $\lambda(x)=x(x+\alpha+\beta+1)$.

The examples of bispectral dual Hahn polynomials constructed in this paper are also interesting by the following reason.
As it has been shown in \cite{due0} and \cite{due1}, when one applies
duality (in the sense of \cite{Leo}) to Krall-Charlier, Krall-Meixner or Krall-Krawtchouk orthogonal polynomials, exceptional discrete polynomials appear. Exceptional and exceptional discrete orthogonal polynomials $p_n$, $n\in X\varsubsetneq \NN$, are complete orthogonal polynomial systems with respect to a positive measure which in addition are eigenfunctions of a second order differential or difference operator, respectively. They extend the  classical families of Hermite, Laguerre and Jacobi or the classical discrete families of Charlier, Meixner and Hahn.

The last few years have seen a great deal of activity in the area  of exceptional orthogonal polynomials (see, for instance,
\cite{due0}, \cite{due1},\cite{GUKM2} (where the adjective \textrm{exceptional} for this topic was introduced),  \cite{GUGM}, \cite{GQ}, \cite{MR},  \cite{OS3}, \cite{OS4}, \cite{STZ}, and the references therein).
 The most apparent difference between classical or classical discrete orthogonal polynomials and their exceptional counterparts
is that the exceptional families have gaps in their degrees, in the
sense that not all degrees are present in the sequence of polynomials (as it happens with the classical families) although they form a complete orthonormal set of the underlying $L^2$ space defined by the orthogonalizing positive measure. This
means in particular that they are not covered by the hypotheses of Bochner's and Lancaster's classification theorems for classical and classical discrete orthogonal polynomials, respectively (see \cite{B} or \cite{La}).

The connection by duality between Krall discrete and exceptional discrete polynomials is remarkable because anything similar it is known to happen for classical polynomials and differential operators. For exceptional Charlier and Meixner polynomials one can construct exceptional Hermite and Laguerre polynomials by passing to the limit in the same form as one goes from Charlier and Meixner to Hermite and Laguerre in the Askey tableau. The relation between Krall and exceptional polynomials at the level of the classical discrete families is rather helpful even at the classical level. For instance, using it one can lighten the difficult problem of finding necessary and sufficient conditions for the existence of a positive measure with respect to which the exceptional Hermite and Laguerre polynomials are orthogonal. In a forthcoming paper \cite{due2}, we will construct Hahn polynomials by applying duality to the Krall dual Hahn polynomials constructed in this paper, with some applications to the construction of exceptional Jacobi polynomials.

For the dual Hahn polynomials $(R_n^{\alpha,\beta, N})_n$, we have to work in the subring $\PP^\lambda $
of $\PP$ consisting of polynomials in the variable $\lambda$, where $\lambda(x)=x(x+\alpha+\beta+1)$:
\begin{equation}\label{defsrdh}
\PP^\lambda=\{p(\lambda(x)):p\in \PP\}.
\end{equation}
The measure with respect to which our bispectral dual Hahn polynomials are orthogonal, will be defined by applying a Christoffel transform to the dual Hahn measure. More precisely, for real numbers $\alpha,\beta$  and a positive integer $N$, we denote by $\rho _{\alpha,\beta,N}$ the dual Hahn weight (see (\ref{masdh}) below). Let $\F=(F_1,F_2,F_3)$  be a trio of finite sets of positive integers (the empty set is allowed). Under mild condition on the parameters $\alpha,\beta$ and $N$, we will then prove (in a constructive way) that the weight $\rho _{\alpha,\beta,N}^{\F}$ defined by
\begin{equation}\label{mii}
\rho _{\alpha,\beta,N}^{\F}=\prod_{f\in F_1}(\lambda-\lambda(N-f))\prod_{f\in F_2}(\lambda-\lambda(f))\prod_{f\in F_3}(\lambda-\lambda(f-\beta))\rho _{\alpha,\beta,N},
\end{equation}
has associated a sequence of orthogonal polynomials and they are eigenfunctions of a higher order difference operator (for an example of orthogonal polynomials constructed from the dual Hahn family and satisfying fourth order difference equations see \cite{TVZ}).

In order to prove this result, we use $\D$-operators and the approach developed in \cite{ddI0} for constructing Krall polynomials from the Charlier, Meixner and Krawtchouk families.

This approach has three  ingredients (which will be considered in Section 3).

\medskip
\noindent
\textbf{The first ingredient: $\D$-operators}. This is an abstract concept introduced in \cite{du1} by the author which has shown to be very useful to generate Krall, Krall discrete and $q$-Krall families of polynomials (see \cite{du1}, \cite{AD}, \cite{ddI0}, \cite{ddI1}).

To define a $\D$-operator, we need a sequence of polynomials (in $\lambda$) $(p_n)_n$, $\deg_\lambda p_n=n$, and an algebra of operators $\A $ acting in $\mathbb{P}^\lambda$. In addition, we assume that the polynomials $p_n$, $n\ge 0$, are eigenfunctions of certain operator $D_p\in \A$ with eigenvalues that are linear in $n$: that is, we assume that $D_p(p_n)=np_n$, $n\ge 0$. Observe that no orthogonality conditions are imposed at this stage on the polynomials $(p_n)_n$.
Given  a sequence of numbers $(\varepsilon_n)_n$, a $\D$-operator $\D$ associated to the algebra $\A$ and the sequence of polynomials
$(p_n)_n$ is defined  by linearity in $\PP$ from
\begin{equation}\label{Dhi}
\D(p_n)=\sum _{j=1}^n (-1)^{j+1}\varepsilon_n\cdots \varepsilon_{n-j+1}p_{n-j},\quad n\ge 0.
\end{equation}
We then say that the lowering operator $\D$ is a $\D$-operator if $\D\in \A$.

Using $\D$-operators we can construct from the polynomials $(p_n)_n$ a huge class of families of polynomials $(q_n)_n$ which are also eigenfunctions of operators in the algebra $\A$. Indeed, assume we have $m$ $\D$-operators $\D_1, \D_2, \ldots, \D_m$ (not necessarily different) defined by the sequences $(\varepsilon _n^h)_n$, $h=1,\ldots , m$, and that these sequences are also defined for $n\in \ZZ$.
We write $\xi_{x,i}^h$, $i\in\ZZ$ and $h=1,2,\ldots,m$, for the auxiliary functions defined by
\begin{equation*}\label{defxiii}
\xi_{x,i}^h=\prod_{j=0}^{i-1}\varepsilon_{x-j}^{h}, \quad i\ge 1,\quad \quad \xi_{x,0}^h=1,\quad\quad \xi_{x,i}^h=\frac{1}{\xi_{x-i,-i}^h},\quad i\leq-1.
\end{equation*}

For $m$ arbitrary polynomials $Y_1, Y_2, \ldots, Y_m$, we consider the sequence of polynomials $(q_n)_n$ defined by
\begin{equation}\label{qusi}
q_n(\lambda)=\begin{vmatrix}
               p_n(\lambda) & -p_{n-1}(\lambda) & \cdots & (-1)^mp_{n-m}(\lambda) \\
               \xi_{n,m}^1Y_1(n) &  \xi_{n-1,m-1}^1Y_1(n-1) & \cdots & Y_1(n-m) \\
               \vdots & \vdots & \ddots & \vdots \\
                \xi_{n,m}^mY_m(n) &  \xi_{n-1,m-1}^mY_m(n-1) & \cdots & Y_m(n-m)
             \end{vmatrix}.
\end{equation}
To ensure that at least a finite family of the polynomials $q_n$ has degree $n$ we assume that there exists a positive integer $M$ such that the (quasi) Casorati determinant
$$
\Omega (n)=\det \left(\xi_{n-j,m-j}^lY_l(n-j)\right)_{l,j=1}^m,
$$
satisfies that $\Omega(n)\not =0$ for $n=0,1,\cdots , M$ (this is necessary if we want the polynomials $q_n$, $0\le n\le M$, to be orthogonal).

Notice that the dependence in $\lambda$ in the determinant (\ref{qusi}) appears only in the first row, and hence $q_n$ is a linear combination of $m+1$ consecutive $p_n$'s. The magic of  $\D$-operators is that, whatever the polynomials $Y_j$'s are, there always exists an operator $D_q$ in the algebra $\A$ for which the polynomials $q_n$, $0\le n\le M$, are eigenfunctions. Moreover, the operator $D_q$ can be explicitly constructed from the operator $D_p$ using the $\D$-operators $\D_j$, $j=1,\ldots , m$. To stress the dependence of the polynomials $q_n$, $n\ge 0$, on the polynomials $Y_j$, $j=1,\ldots , m$, we write $q_n=\Cas_{n}^{Y_1,\ldots , Y_m}$. Polynomials defined by  other similar forms of Casorati determinants have also a long tradition in the context of orthogonal polynomials and bispectral polynomials. Casorati determinants appear, for instance, to express orthogonal polynomials with respect to the Christoffel or Geronimus transform of a measure. See  \cite{Sz}, Th. 2.5 for the Christoffel transform and \cite{Gs}, \cite{Gs2} or \cite{Zh0} (and the references therein) for the Geronimus transform. The Geronimus transform associated to the polynomial $q(x)=(x-f_1)\cdots (x-f_k)$ is defined as follows: we say that $\tilde \mu$ is a Geronimus transform of $\mu$ if $q\tilde \mu =\mu$. Notice that the Geronimus transform is reciprocal of the Christoffel transform (see the preliminaries for more details).

In this paper, we construct three different $\D$-operators for dual Hahn polynomials (see Lemma \ref{dhdo} in Section \ref{sch}). One important difference between dual Hahn polynomials and the Charlier, Meixner or Krawtchouk families consider in \cite{ddI0} is that for dual Hahn polynomials one of the sequences $\varepsilon _n$ vanishes at certain positive integer.
For the benefit of the reader, we display here this sequence and the associated $\D$-operator: the sequence given by
$$
\varepsilon_{n}=\frac{N-n+1}{\alpha +n},
$$
defines the following $\mathcal{D}$-operator (see \eqref{Dhi}) for the dual Hahn polynomials:
$$
\mathcal{D}=\frac{(x+\alpha+\beta+1)(N-x)}{(2x+\alpha+\beta+1)(2x+\alpha+\beta+2)}\Delta_x+
\frac{x(x+\alpha+\beta+N+1)}{(2x+\alpha+\beta)(2x+\alpha+\beta+1)}\nabla_x.
$$
Notice that $\varepsilon_{N+1}=0$.

\bigskip

\noindent
\textbf{The second ingredient.} With the second ingredient, orthogonality with respect to a measure enters into the picture (Section 4). Indeed, even if we assume that the polynomials $(p_n)_n$ are orthogonal, only for a convenient choice of the polynomials $Y_j$, $j=1,\ldots, m$, the polynomials (\ref{qusi}) $q_n=\Cas_n^{Y_{1},\ldots , Y_{m}}$, $n\ge 0$, are also orthogonal with respect to a measure. When we take the polynomials $(p_n)_n$ to be the dual Hahn polynomials, the second ingredient establishes how to chose the polynomials $Y_j$'s such that the polynomials $q_n=\Cas_n^{Y_1,\ldots ,Y_m}$ (\ref{qusi}) are also orthogonal with respect to a measure. As for the case of Charlier, Meixner and Krawtchouk (studied in \cite{ddI0}), this second ingredient turns into a very nice symmetry between the dual Hahn family and the polynomials $Y_j$'s. Indeed, the polynomials $Y_j$'s can be chosen to be Hahn polynomials, but with a suitable modification of the parameters. More precisely, given a $\D$-operator for the dual Hahn family and a nonnegative integer $j$ we provide a polynomial $Y_j$ of degree $j$ such that for any different nonnegative integers $g_1,\ldots , g_m$, the polynomials $q_n=\Cas_n^{Y_{g_1},\ldots , Y_{g_m}}$, $n\ge 0$, are orthogonal with respect to a measure $\tilde \rho$. For the $\D$-operator display above we have $Y_j(x)=h_j^{-\alpha,\beta,-\beta-2-N}(-x-1)$, where by $(h_n^{\alpha,\beta,N})_n$ we denote the Hahn polynomials (see (\ref{hxpol}) below).
\bigskip

\noindent
\textbf{The third ingredient.} We still need a last ingredient for identifying the measure $\tilde \rho$ with respect to which the polynomials $q_n=\Cas_n^{Y_{g_1},\ldots ,Y_{g_m}}$ (\ref{qusi}) are orthogonal. These polynomials depend on the set of indices $G=\{g_1,\ldots , g_m\}$ (the degrees of the polynomials $Y_{g_j}$). It turns out that the orthogonalizing measure for the polynomials $(\Cas_n^{Y_{g_1},\ldots ,Y_{g_m}})_n$ is one of the measures (\ref{mii}) introduced above. These measures depend on certain finite sets $F$'s of positive integers. The third ingredient establishes the relationship between these sets $F$'s and the set $G$. This relationship is given by suitable transforms defined in the set $\Upsilon$ formed by all finite sets of positive integers.
\medskip

In Section 5 we will put together all these ingredients to construct bispectral dual Hahn polynomials.

We finish pointing out that, as explained above, the approach of this paper is the same as in \cite{ddI0} for Charlier, Meixner and Krawtchouk  polynomials. Since we work here in a quadratic lattice with a trio of finite sets of positive integers (instead of at most two sets as in \cite{ddI0}), and more parameters, the computations are technically more involved. Anyway, we will omit those proofs which are too similar to the corresponding ones in \cite{ddI0}.

\section{Preliminaries}
For a linear operator $D:\PP \to \PP$ and a polynomial $P(x)=\sum _{j=0}^ka_jx^j$, the operator $P(D)$ is defined in the usual way
$P(D)=\sum _{j=0}^ka_jD^j$.

For a moment functional $\mu$ on the real line, that is, a linear mapping $\mu :\PP \to \RR$, the $n$-th moment of $\mu $ is defined by $\mu_n=\langle \mu, x^n\rangle $.
It is well-known that any moment functional on the real line can be represented by integrating with respect to a Borel measure
(positive or not) on the real line
(this representation is not unique \cite{du-1}).
If we also denote this measure by $\mu$, we have $\langle \mu,p\rangle=\int p(x)d\mu(x)$ for all polynomial $p\in \PP$. Taking this into account,
we will conveniently use along this paper one or other terminology (orthogonality with respect to a moment functional or with
respect to a measure).
We say that a sequence of polynomials  $(p_n)_n$, $p_n$ of degree $n$, $n\ge 0$, is orthogonal with respect to the moment functional $\mu$ if $\langle \mu, p_np_m\rangle=0$, for $n\not =m$ and $\langle \mu, p_n^2\rangle\not =0$. Since the most important examples consider in this paper are orthogonal polynomials with respect to a degenerate measure (when $N$ is a positive integer the dual Hahn polynomials $R_n^{\alpha,\beta,N}$ are orthogonal with respect to a finite combination of deltas), we will stress this property of non-vanishing norm when necessary.

As we wrote in the introduction, for the dual Hahn polynomials $(R_n^{\alpha,\beta, N})_n$, we have to work in the subring $\PP^\lambda $
defined by (\ref{defsrdh}). Hence for a moment functional $\mu$ on the real line, we also denote by $\mu$ the corresponding moment functional in $\PP^\lambda$ defined by $\langle \mu, p(\lambda)\rangle=\langle \mu, p(\lambda(x))\rangle$.

Favard's Theorem establishes that a sequence $(p_n)_{0\le n\le M}$ (where $M$ is a positive integer or infinity) of polynomials, $p_n$ of degree $n$, is orthogonal (with non null norm) with respect to a measure if and only if it satisfies a three term recurrence relation of the form ($p_{-1}=0$)
\begin{equation}\label{fvo}
\lambda p_n(\lambda )=a_{n+1}p_{n+1}(\lambda )+b_np_n(\lambda )+c_np_{n-1}(\lambda ), \quad 0\le n\le M-1,
\end{equation}
where $(a_n)_{n}$, $(b_n)_{n}$ and $(c_n)_{n}$ are sequences of real numbers with $a_{n}c_n\not =0$, $1\le n\le M$. If, in addition, $a_{n}c_n>0$, $1\le n\le M$,
then the polynomials $(p_n)_{0\le n\le M}$ are orthogonal with respect to a positive measure, and the reciprocal is also true. If $M=\infty$, this measure will have infinitely many points in its support, otherwise the support might be formed by finitely many points.

\bigskip

As we wrote in the Introduction, the kind of transformation which consists in multiplying a moment functional $\mu$ by a polynomial $r$ is called a Christoffel transform. The new moment functional $r\mu$ is defined by $\langle r\mu,p\rangle =\langle \mu,rp\rangle $. Its reciprocal is the Geronimus transform $\tilde \mu$ which satisfies $r\tilde \mu =\mu$. Notice that the Geronimus transform of the moment functional $\mu$ is not uniquely defined. Indeed, write $a_i$, $i=1,\ldots , u$, for the different real roots of the polynomial $r$, each one with multiplicity $b_i$, respectively.
It is easy to see that if $\tilde \mu$ is a Geronimus transform of $\mu$ then the moment functional $\tilde \mu +\sum_{i=1}^u\sum_{j=0}^{b_i-1}\alpha_{i,j}\delta _{a_i}^{(j)}$ is also a Geronimus transform of $\mu$, where $\alpha_{i,j}$ are real numbers. These numbers are usually called the free parameters of the Geronimus transform.

In the literature, Geronimus transform is sometimes called Darboux transform with parameters while Christoffel transform is called Darboux transform without parameters. The reason is the following. The three term recurrence relation (\ref{fvo}) for the orthogonal polynomials with respect to $\mu$ can be rewritten as $\lambda p_n=J(p_n)$, where $J$ is the second order difference operator $J=a_{n+1}\Sh_1+b_n\Sh_0+c_n\Sh_{-1}$ and $\Sh_l$ the shift operator (acting on the discrete variable $n$): $\Sh_l(x_n)=x_{n+l}$.
For any $f \in \CC $, decompose $J$ into $J=AB+f I$
whenever it is possible, where $A=\alpha_n\Sh_0+\beta_n\Sh_1$ and $B= \delta_n\Sh_{-1}+\gamma_n\Sh_0$. We then call $\tilde J=BA+f I$
a Darboux transform of $J$ with parameter $f$. It turns out that the second order difference operator $\tilde J$ associated to a Geronimus transform $\tilde \mu$ of $\mu$ can be obtained by applying a sequence of  $k$ Darboux transforms (with parameters $f_i$, $i=1,\ldots, k$) to the operator $J$ associated to the measure $\mu$. This kind of Darboux transform has been used by Gr\"unbaum, Haine, Hozorov, Yakimov and Iliev to construct Krall and $q$-Krall polynomials. For the particular cases of Laguerre, Jacobi or Askey-Wilson polynomials, one can found Casorati determinants similar to (\ref{qusi}) in \cite{GrHH}, \cite{GrY}, \cite{HP}, \cite{Plamen1} or \cite{Plamen2}.

The family of measures $\rho _{\alpha,\beta,N}^\F$ in the Introduction is defined by applying a Christoffel transform to the dual Hahn weight. But, it turns out that they can also be defined by using the Geronimus transform. This Geronimus transform is however defined by a different polynomial. We also have to make
a suitable choice of the free parameters of this Geronimus transform and apply it to a dual Hahn weight but maybe with different parameters and affected by a shift in the variable. The following example will clarify this point. Consider $F_1=\{1\}$, $F_2=\{ 1\}$, $F_3=\{1\}$ and the Christoffel transform $\rho_{\alpha,\beta,N}^{\F}$ of the dual Hahn weight $\rho _{\alpha,\beta,N}$ defined by the polynomial $p(\lambda)=(\lambda-\lambda(1))(\lambda-\lambda(N-1))(\lambda-\lambda(1-\beta))$, where $\lambda(x)=x(x+\alpha+\beta+1)$. That is
$$
\rho_{\alpha,\beta,N}^{\F}=(\lambda-\lambda(1))(\lambda-\lambda(N-1))(\lambda-\lambda(1-\beta))\rho _{\alpha,\beta,N}.
$$
From the definition of the dual Hahn weight $\rho _{\alpha,\beta,N}$, we have after a simple computation
\begin{align*}
\rho_{\alpha,\beta,N}^{\F}&=p(0)w_{*;\alpha,\beta,N}(0)\delta _0+p(\lambda(N))w_{*;\alpha,\beta,N}(N)\delta_N\\&\hspace{1cm}+(\alpha +1)_4(N-3)_4^2\sum_{x=2}^{N-2} \frac{w_{*;\alpha+4,\beta,N-4}(x-2)}{\lambda(x)(\lambda(x)-\lambda(N))(\lambda(x)-\lambda(-1-\alpha))}\delta _x,
\end{align*}
where $w_{*;\alpha,\beta,N}(x)$ denotes the mass at $x$ of $\rho _{\alpha,\beta,N}$ (see (\ref{masdh}) below).
This shows that
$$
\lambda(\lambda-\lambda(N))(\lambda-\lambda(-1-\alpha))\rho_{\alpha,\beta,N}^{\F}=(\alpha +1)_4(N-3)_4^2\rho_{\alpha+4,\beta,N-4}(x-2).
$$
That is, $\rho_{\alpha,\beta,N}^{\F}$ is also the Geronimus transform defined by the polynomial $\lambda(\lambda-\lambda(N))(\lambda-\lambda(-1-\alpha))$ of the dual Hahn weight $(\alpha +1)_4(N-3)_4^2\rho_{\alpha+4,\beta,N-4}(x-2)$ where the free parameters (associated to the roots $\lambda=0,\lambda=\lambda(N)$ and $\lambda=\lambda(-1-\alpha)$) have to be necessarily chosen equal to $p(0)w_{*;\alpha,\beta,N}(0)$, $p(\lambda(N))w_{*;\alpha,\beta,N}(N)$ and $0$, respectively.

\bigskip

Along this paper, we use the following notation:
given a finite set of positive integers $F=\{f_1,\ldots , f_m\}$, the expression
\begin{equation}\label{defdosf}
  \begin{array}{@{}c@{}cccc@{}c@{}}
    &&&\hspace{-.9cm}{}_{1\le j\le m} \\
    \dosfilas{ z_{f,j}  }{f\in F}
  \end{array}
\end{equation}
inside of a matrix or a determinant will mean the submatrix defined by
$$
\left(
\begin{array}{cccc}
z_{f_1,1} & z_{f_1,2} &\cdots  & z_{f_1,m}\\
\vdots &\vdots &\ddots &\vdots \\
z_{f_m,1} & z_{f_m,2} &\cdots  & z_{f_m,m}
\end{array}
\right) .
$$

\section{The main ingredients}
\subsection{$\D$-operators}
The concept of $\D$-operator was introduced by the author in the paper \cite{du1}. In  \cite{du1}, \cite{ddI0}, \cite{ddI1} and \cite{AD}, it has been showed that $\D$-operators turn out to be an extremely useful tool of a unified method
to generate families of polynomials which are eigenfunctions of higher order differential, difference or $q$-difference operators.
Hence, we start by reminding the concept of $\D$-operator.

As we wrote in the Introduction, for the dual Hahn polynomials $(R_n^{\alpha,\beta, N})_n$, we have to work in the subring $\PP^\lambda $
of $\PP$ consisting of polynomials in the variable $\lambda$, where $\lambda(x)=x(x+\alpha+\beta+1)$ (see (\ref{defsrdh})).

The subring $\PP^\lambda$ can be easily characterized as follows. Consider the involution $\I:\PP\to \PP$ defined by
\begin{equation}\label{definvsr}
\I(p)=p(-(x+\alpha+\beta+1)).
\end{equation}
Clearly we have $\I(\lambda) = \lambda$. Hence every polynomial in $\PP^\lambda$ is invariant under the
action of $\I$. And conversely, if $p\in \PP$ is invariant under $\I$, then $p\in \PP^\lambda$.
We consider the shift operators in $\PP^\lambda$ acting on $x$:
\begin{equation*}
\Sh_j(p(\lambda))=p(\lambda(x+j)).
\end{equation*}
To stress that we will sometimes write $\Sh_{x,j}$ instead of $\Sh_j$. Notice that for $p\in \PP^\lambda$,
$\Sh_{x,j}(p)$ does not belong, in general, to $\PP^\lambda$.

We consider difference operators $T$ in $\PP^\lambda$ of the form
\begin{equation}\label{doho}
T=\sum_{j=s}^rh_j\Sh_{x,j}^\lambda
\end{equation}
with $h_j\in \QQ[x]$, $j=s,\ldots,r, s\leq r$, and where $\QQ[x]$ denotes the linear space of rational functions.
We denote by $\A^\lambda$ the algebra formed by all the operators $T$ of the form (\ref{doho}) which maps $\PP^\lambda$ into itself:
\begin{equation}\label{defal}
\A^\lambda=\{T: \mbox{$T$ is of the form (\ref{doho}) and $T(\PP^\lambda)\subset \PP^\lambda$.}\}
\end{equation}
The second order difference operator for the dual Hahn polynomials (\ref{defbc}) belongs then to $\A^\lambda$.

The starting point to define a $\D$-operator is a sequence of polynomials (in $\lambda$) $(p_n)_n$, $\deg_\lambda p_n=n$, and a subalgebra of operators $\A $ of the algebra $\A^\lambda$ (hence acting in the subring $\mathbb{P}^\lambda$ and mapping it into itself).

In addition, we assume that the polynomials $p_n$, $n\ge 0$, are eigenfunctions of certain operator $D_p\in \A$. We write $(\theta_n)_n$ for the corresponding eigenvalues, so that $D_p(p_n)=\theta_np_n$, $n\ge 0$. Since we are interested in the dual Hahn polynomials, we only consider here the case when the sequence of eigenvalues $(\theta _n)_n$ is linear in $n$.

Given  a sequence of numbers $(\varepsilon_n)_n$, a $\D$-operator associated to the algebra $\A$ and the sequence of polynomials
$(p_n)_n$ is defined as follows.
We first consider  the operator $\D :\PP \to \PP $ defined by linearity
from
\begin{equation}\label{defTo}
\D (p_n)=\sum _{j=1}^n (-1)^{j+1}\varepsilon _n\cdots \varepsilon _{n-j+1}p_{n-j},\quad n\ge 0.
\end{equation}
We then say that the lowering operator $\D$ is a $\D$-operator if $\D\in \A$.

The following Theorem was proved in \cite{ddI0} and shows how to use $\D$-operators to construct new sequences of polynomials $(q_n)_n$ such that there exists an operator $D_q\in \A$ for which they are eigenfunctions. We use $m$ arbitrary polynomials (in $x$) $Y_1, Y_2, \ldots, Y_m$ and $m$ $\D$-operators $\D_1, \D_2, \ldots, \D_m$ (not necessarily different) defined by the sequences $(\varepsilon _n^h)_n$, $h=1,\ldots , m$:
\begin{equation}\label{Dh}
\mathcal{D}_h(p_n)=\sum _{j=1}^n (-1)^{j+1}\varepsilon_{n}^{h}\cdots\varepsilon_{n-j+1}^{h}p_{n-j},\quad h=1,2,\ldots,m.
\end{equation}

We will assume that for $h=1,2,\ldots,m$, the sequence $(\varepsilon_{n}^{h})_n$ is a rational function in $n$ (actually that is the case for the three $\D$-operators we will construct in Section \ref{sch} for the dual Hahn polynomials). We write $\xi_{x,i}^h$, $i\in\ZZ$ and $h=1,2,\ldots,m$, for the auxiliary functions defined by
\begin{equation}\label{defxi}
\xi_{x,i}^h=\prod_{j=0}^{i-1}\varepsilon_{x-j}^{h}, \quad i\ge 1,\quad \quad \xi_{x,0}^h=1,\quad\quad \xi_{x,i}^h=\frac{1}{\xi_{x-i,-i}^h},\quad i\leq-1.
\end{equation}
We will consider the $m\times m$ (quasi) Casorati determinant defined by
\begin{equation}\label{casd1}
\Omega (x)=\det \left(\xi_{x-j,m-j}^lY_l(x-j)\right)_{l,j=1}^m.
\end{equation}

The next Theorem is a slight modified version of Theorem 3.2 of \cite{ddI0} (adapted to the particularities of dual Hahn polynomials). This Theorem shows how to use $\D$-operators to construct new sequences of polynomials $(q_n)_n$  such that there exists an operator $D_q$ in $\A$ for which they are eigenfunctions

\begin{theorem}[Theorem 3.2 of \cite{ddI0}]\label{Teor1} Let $\A$ and $(p_n)_n$ be, respectively, a subalgebra of operators $\A $ of the algebra $\A^\lambda$ (hence acting in the subring $\mathbb{P}^\lambda$ and mapping it into itself), and a sequence of polynomials (in $\lambda$) $(p_n)_n$, $\deg_\lambda p_n=n$. We assume that $(p_n)_n$ are eigenfunctions of an operator $D_p\in \A$ with eigenvalues equal to $n$, that is, $D_p(p_n)=np_n$, $n\ge 0$. We also have $m$ sequences of numbers $(\varepsilon_{n}^{1} )_n,\ldots,(\varepsilon_{n}^{m} )_n$, which define $m$ $\D$-operators $\D_1,\ldots,\D_m$ (not necessarily different) for $(p_n)_n$ and $\A$ (see \eqref{Dh})) and assume that for $h=1,2,\ldots,m$, each sequence $(\varepsilon_{n}^{h})_n$ is a rational function in $n$.

Let $Y_1, Y_2, \ldots, Y_m$ be $m$ arbitrary polynomials (in $x$) satisfying that $\Omega (x)\not =0$, $x\in \NN$, $0\le x\le M$, where $M$ is certain positive integer or infinity and $\Omega $ is the Casorati determinant defined by \eqref{casd1}.

Consider the sequence of polynomials $(q_n)_n$ defined by
\begin{equation}\label{qus}
q_n(\lambda)=\begin{vmatrix}
               p_n(\lambda) & -p_{n-1}(\lambda) & \cdots & (-1)^mp_{n-m}(\lambda) \\
               \xi_{n,m}^1Y_1(n) &  \xi_{n-1,m-1}^1Y_1(n-1) & \cdots & Y_1(n-m) \\
               \vdots & \vdots & \ddots & \vdots \\
                \xi_{n,m}^mY_m(n) &  \xi_{n-1,m-1}^mY_m(n-1) & \cdots & Y_m(n-m)
             \end{vmatrix}.
\end{equation}
For a rational function $S(x)$ and $h=1,\ldots,m$, we define the function $M_h(x)$ by
\begin{equation}\label{emeiexp}
M_h(x)=\sum_{j=1}^m(-1)^{h+j}\xi_{x,m-j}^hS(x+j)
\det\left(\xi_{x+j-r,m-r}^{l}Y_l(x+j-r)\right)_{l\in \II_h;r\in \II_j},
\end{equation}
where $\II_h=\{1,2,\ldots,m\}\setminus\{h\}$.
If we assume that the functions $S(x)\Omega (x)$ and $M_h(x)$, $h=1,\ldots,m$, are polynomials in $x$
then there exists an operator $D_{q,S}\in \A$ such that
$$
D_{q,S}(q_n)=\lambda_nq_n,\quad 0\le n\le M.
$$
Moreover, an explicit expression of this operator can be displayed. Indeed, write  $P_S$ for the polynomial defined by
\begin{equation}\label{Pgs}
P_S(x)-P_S(x-1)=S(x)\Omega (x).
\end{equation}
Then the operator $D_{q,S}$ is defined by
\begin{equation}\label{Dq}
D_{q,S}=P_S(D_p)+\sum_{h=1}^mM_h(D_p)\D_hY_h(D_p),
\end{equation}
where $D_p\in \A$ is the operator for which the polynomials $(p_n)_n$ are eigenfunctions. Moreover
$\lambda_n=P_S(n)$.

\end{theorem}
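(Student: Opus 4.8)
The plan is to reduce the eigenvalue equation to a computation in the basis $(p_k)_k$ and then to a determinantal identity which, up to the ambient algebra, is already established in \cite{ddI0}. First I would expand the Casorati determinant \eqref{qus} along its first row; since the $(1,j)$-cofactor sign $(-1)^{1+j}$ combines with the sign $(-1)^{j-1}$ of the entry $(-1)^{j-1}p_{n-j+1}$ to give $+1$, this produces
\begin{equation*}
q_n(\lambda)=\sum_{i=0}^m \Omega_i(n)\,p_{n-i}(\lambda),
\end{equation*}
where $\Omega_i(n)$ is the $m\times m$ minor of \eqref{qus} obtained by deleting the first row and the $(i+1)$-st column; in particular $\Omega_0(n)=\Omega(n)$, the determinant \eqref{casd1}. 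Because $D_p(p_k)=kp_k$, the operators $P_S(D_p)$, $Y_h(D_p)$ and $M_h(D_p)$ act diagonally on the $p_k$, with eigenvalues $P_S(k)$, $Y_h(k)$ and $M_h(k)$; this is exactly where the hypothesis that $S(x)\Omega(x)$ and the $M_h(x)$ (and of course the $Y_h$) are polynomials enters, since it makes these operators lie in $\A$, whence $D_{q,S}\in\A$ because $\A$ is an algebra containing $D_p$ and the $\D_h$. The last ingredient is that $\D_h$ lowers degree by \eqref{Dh}: $\D_h p_k=\sum_{t\ge1}(-1)^{t+1}\xi^h_{k,t}\,p_{k-t}$.

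Next I would apply $D_{q,S}$ termwise to $q_n=\sum_i\Omega_i(n)p_{n-i}$ and collect the coefficient of each $p_{n-s}$. Comparing with $P_S(n)q_n$, the identity $D_{q,S}(q_n)=P_S(n)q_n$ becomes equivalent to the family of scalar identities: for $0\le s\le m$,
\begin{equation*}
\bigl(P_S(n)-P_S(n-s)\bigr)\,\Omega_s(n)=\sum_{i=0}^{s-1}\sum_{h=1}^m(-1)^{s-i+1}\,\xi^h_{n-i,s-i}\,Y_h(n-i)\,M_h(n-s)\,\Omega_i(n),
\end{equation*}
together with the vanishing of the analogous right-hand side (with $i$ running up to $m$) when $s>m$. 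By \eqref{Pgs} the left-hand side telescopes to $\sum_{t=1}^sS(n-t+1)\Omega(n-t+1)\,\Omega_s(n)$, so after inserting the defining alternating sum \eqref{emeiexp} of $M_h$ over the index sets $\II_h$, both identities become purely determinantal.

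The hard part will be this determinantal core. One must show that, after interchanging the order of summation, the double sum over $h$ and $i$ --- of products of a $\xi$-factor, a value $Y_h(n-i)$, an $(m-1)\times(m-1)$ minor over $\II_h$, and the minor $\Omega_i(n)$ --- reorganizes into a Laplace/Sylvester-type expansion of a single larger determinant assembled from the $m+1$ columns of \eqref{qus} together with one extra column carrying the $S$-data. For $s>m$ this determinant has a repeated column (there are not enough distinct column indices available) and therefore vanishes; for $s\le m$ it collapses, by the same mechanism, to precisely the telescoped sum $\sum_{t=1}^sS(n-t+1)\Omega(n-t+1)\,\Omega_s(n)$. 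The genuinely delicate points are the bookkeeping of the three layers of signs --- $(-1)^{h+j}$ from \eqref{emeiexp}, $(-1)^{s-i+1}$ from $\D_h$, and the cofactor signs --- and of the nested index sets $\II_h,\II_j$.

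This determinantal manipulation is, however, exactly the computation carried out in the proof of Theorem~3.2 of \cite{ddI0}; the only structural novelty here is that $\A$ sits inside $\A^\lambda$ rather than inside the full algebra of difference operators, so I would transfer that argument essentially verbatim, adding only the remark that $D_{q,S}\in\A\subset\A^\lambda$ maps $\PP^\lambda$ into itself, so that $D_{q,S}(q_n)$ is again a polynomial in $\lambda$ and all identities above make sense in $\PP^\lambda$ (the cases $n<m$ being covered by the conventions $p_{-1}=p_{-2}=\cdots=0$ and \eqref{defxi} for $\xi^h_{x,i}$ with $i\le -1$). Finally $\lambda_n=P_S(n)$ by construction, and for $0\le n\le M$ the hypothesis $\Omega(n)\ne0$ forces the coefficient $\Omega_0(n)=\Omega(n)$ of $p_n$ in $q_n$ to be nonzero, hence $\deg_\lambda q_n=n$ on that range.
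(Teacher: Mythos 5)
Your proposal is correct and follows the same route as the paper, which states this result as an adaptation of Theorem 3.2 of \cite{ddI0} and gives no independent proof: your expansion of \eqref{qus} along the first row, the reduction of $D_{q,S}(q_n)=P_S(n)q_n$ to the scalar identities on the coefficients of each $p_{n-s}$ (with the correct signs and $\xi$-factors), and the telescoping via \eqref{Pgs} are exactly the framework of that cited proof, whose determinantal core you then transfer verbatim. Your added remark that the only structural change is the ambient algebra $\A\subset\A^\lambda$ acting on $\PP^\lambda$, and that no negative-index $\xi^h_{x,i}$ (hence no vanishing $\varepsilon^h_{n}$ issue) enters the formulas of the theorem itself, is precisely the adaptation the paper alludes to.
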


Notice that the dependence in $\lambda$ of the polynomials (\ref{qus}) appears only in the first row, and hence $q_n$ is a linear combination of $m+1$ consecutive $p_n$'s.

\bigskip

In Section \ref{sch},  we will apply Theorem \ref{Teor1} to the dual Hahn polynomials. We will see there that
the degree of the polynomial $P_S$ (see \eqref{Pgs}) will give the order of the difference operator $D_{q,S}$ \eqref{Dq} with respect to which the new polynomials $(q_n)_n$ are eigenfunctions. This will be a consequence of the following Lemma (which we will prove in Section \ref{sproofs}).

\begin{lemma}\label{lgp2} With the same notation as in the previous Theorem, write
$$
\Psi_j^h(x)=\xi_{x-j,m-j}^hS(x)
\det\left(\xi_{x-r,m-r}^{l}Y_l(x-r)\right)_{l\in \II_h;r\in \II_j}, \quad h,j=1,\cdots , m,
$$
and $\Omega _g^h$, $h=1,\cdots , m$, $g\in \NN$, for the particular case of $\Omega$ when $Y_h(x)=x^g$.
Assume that $\Psi_j^h$, $h,j=1,\cdots , m$, are polynomials in $x$ and write $\tilde d=\max\{\deg \Psi_j^h:h,j=1,\cdots , m\}$.
Then $M_h$ and $S\Omega _g^h$, $h=1,\cdots , m$, $g\in \NN$, are also polynomials in $x$. If, in addition, we assume that the degree of $S(x)\Omega _g^h$ is at most $g+\deg (S(x)\Omega_0^h)$ for $h=1,\cdots, m$ and $g\le \tilde d-\deg (S(x)\Omega_0^h)$, then $M_h$ is a polynomial of degree at most $\deg (S(x)\Omega_0^h)$.
\end{lemma}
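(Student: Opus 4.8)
The plan is to reduce the whole statement to two elementary identities obtained by a cofactor expansion of $\Omega$, and then to run a triangular induction on degrees. First I would expand the Casorati determinant \eqref{casd1} along its $h$-th row. The minor obtained by deleting row $h$ and column $j$ is precisely $\det(\xi_{x-r,m-r}^{l}Y_l(x-r))_{l\in\II_h;r\in\II_j}$, and it does not involve $Y_h$; hence, multiplying the expansion by $S(x)$ and replacing $Y_h(x)$ by $x^g$ (which turns the $(h,j)$-entry into $\xi_{x-j,m-j}^h(x-j)^g$) gives
\[
S(x)\Omega_g^h(x)=\sum_{j=1}^m(-1)^{h+j}(x-j)^g\,\Psi_j^h(x),\qquad g\in\NN .
\]
In particular $S\Omega_0^h=\sum_{j=1}^m(-1)^{h+j}\Psi_j^h$. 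On the other hand, comparing \eqref{emeiexp} with the definition of $\Psi_j^h$ and using $\xi_{x,m-j}^h=\xi_{(x+j)-j,m-j}^h$ yields the companion identity
\[
M_h(x)=\sum_{j=1}^m(-1)^{h+j}\,\Psi_j^h(x+j).
\]
Since both right-hand sides are finite sums of polynomials in $x$ whenever each $\Psi_j^h$ is, this already proves that $M_h$ and $S\Omega_g^h$ are polynomials in $x$.

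For the degree bound, write $\phi_j=(-1)^{h+j}\Psi_j^h$, set $D=\deg(S\Omega_0^h)$, and introduce $\sigma_i(x)=\sum_{j=1}^m j^i\phi_j(x)$, so that $\sigma_0=S\Omega_0^h$. Expanding $(x-j)^g$ by the binomial theorem in the first identity above yields the triangular relation $S\Omega_g^h=\sum_{i=0}^g\binom{g}{i}(-1)^i x^{g-i}\sigma_i$, which one solves recursively for $\sigma_g$ in terms of $S\Omega_0^h,\dots,S\Omega_g^h$ and $\sigma_0,\dots,\sigma_{g-1}$. An induction on $g$, using the hypothesis $\deg(S\Omega_g^h)\le g+D$ for $g\le\tilde d-D$, then gives $\deg\sigma_g\le D+g$ for all such $g$; and for $g>\tilde d-D$ the trivial estimate $\deg\sigma_g\le\tilde d$ (each $\phi_j$ has degree at most $\tilde d$) already gives $\deg\sigma_g\le D+g$. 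Hence $\deg\sigma_i\le D+i$ for every $i\ge0$.

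Finally I would expand the second identity by Taylor's formula, $\Psi_j^h(x+j)=\sum_{i\ge0}\frac{j^i}{i!}(\Psi_j^h)^{(i)}(x)$, obtaining
\[
M_h(x)=\sum_{i\ge0}\frac{1}{i!}\sum_{j=1}^m j^i\phi_j^{(i)}(x)=\sum_{i\ge0}\frac{1}{i!}\,\sigma_i^{(i)}(x).
\]
Since $\deg\sigma_i^{(i)}\le\deg\sigma_i-i\le D$ for every $i$ by the previous paragraph, $M_h$ is a finite sum of polynomials of degree at most $D$, so $\deg M_h\le D=\deg(S\Omega_0^h)$, as claimed.

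The fiddly part is the first step: keeping track of the signs $(-1)^{h+j}$ and of the index sets $\II_h,\II_j$ in the cofactor expansion, and checking that the relevant minor is genuinely independent of $Y_h$ (so that it survives unchanged when one passes from $\Omega$ to $\Omega_g^h$). Once the two identities are in hand, the real content is the observation that $M_h=\sum_i\frac1{i!}\sigma_i^{(i)}$, which converts the degree information about the polynomials $S\Omega_g^h$ — equivalently, about the $\sigma_i$ — directly into the degree bound for $M_h$; together with the short triangular induction this is what makes the hypothesis on $\deg(S\Omega_g^h)$ do its job.
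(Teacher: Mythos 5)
Your proof is correct and follows essentially the same route as the paper: the same two cofactor-expansion identities for $S\Omega_g^h$ and $M_h$, the same triangular induction on $g$ (your bound $\deg\sigma_g\le D+g$ is precisely the paper's vanishing condition on the coefficient sums $\sum_{j}(-1)^{j}j^{g}a_i^{h,j}$), and the same final regrouping, which you phrase via Taylor's formula instead of a binomial re-expansion of $(x+j)^i$. The only cosmetic discrepancy is that the paper writes $(x-j+1)^g$ where you write $(x-j)^g$ for the entry coming from $Y_h(x)=x^g$; this does not affect the argument.
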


To compute the degree of the polynomial $P_S$ (see \eqref{Pgs}) we will use the following Lemma (which we will also prove in Section \ref{sproofs}).

For a complex number $u\in \CC$, write $s_j^u$, $j=0,1,2,\cdots $, for the polynomial
\begin{equation}\label{defsj}
s_j^u(x)=(u-x)_{j}.
\end{equation}

Given a trio $\U=(U_1,U_2,U_3)$  of finite sets of nonnegative integers, we write $m_j$
for the number of elements of $U_j$, $j=1,2,3$, $m=m_1+m_2+m_3$ and
\begin{equation}\label{ppmm}
\UU_1=\{1,\cdots, m_1\},\quad \UU_2=\{m_1+1,\cdots, m_1+m_2\},\quad \UU_3=\{m_1+m_2+1,\cdots, m\}.
\end{equation}
We write $U_j=\{u_i^{j\rceil},i\in \UU_j\}$.

\begin{lemma}\label{lgp1} Let $Y_1, Y_2, \ldots, Y_m,$ be nonzero polynomials satisfying that $\deg Y_i=u_{i}^{j\rceil}$, if $i\in \UU_j$ and $1\le j\le 3$. Write $r_i$ for the leading coefficient of $Y_i$, $1\le i\le m$.
For real numbers $N, \alpha, \beta$, consider the rational function $P$ defined by
\begin{equation}\label{def2p}
P(x)=\frac{\left|
  \begin{array}{@{}c@{}lccc@{}c@{}}
    &&&\hspace{-.9cm}{}_{1\le j\le m} \\
    \dosfilas{ Y_{i}(x-j) }{i\in \UU_1} \\
    \dosfilas{ s_{m-j}^{\beta+N+1}(x-j)s_{j-1}^{-\alpha+1}(x)Y_{i}(x-j) }{i\in \UU_2}\\
     \dosfilas{ s_{m-j}^{N+1}(x-j)s_{j-1}^{-\alpha+1}(x)Y_{i}(x-j) }{i\in \UU_3}
  \end{array}
  \hspace{-.4cm}\right|}{p(x)},
\end{equation}
where $p$ is the polynomial
\begin{align}\label{def1p}
p(x)&=\prod_{i=0}^{m_2-2}(\beta+N+m-x-i)^{m_2-i-1}\prod_{i=0}^{m_3-2}(N+m-x-i)^{m_3-i-1}
 \\\nonumber &\qquad \times \prod_{i=0}^{m_2+m_3-2}(-\alpha-x+i+1)^{m_2+m_3-i-1}.
\end{align}
The determinant (\ref{def2p}) should be understood in the way explained in the Preliminaries (see (\ref{defdosf})).
If
\begin{equation}\label{yas0}
\beta-v+w,\alpha+\beta +N+1+u-v,\alpha+N+1+u-w\not =0,
\end{equation}
for $u\in U_1,v\in U_2,w\in U_3$, then $P$ is a polynomial of degree
\begin{equation}\label{ddp}
d=\sum_{u\in U_1,U_2,U_3}u-\binom{m_1}{2}-\binom{m_2}{2}-\binom{m_3}{2}
\end{equation}
with leading coefficient given by
\begin{align}\label{mspcl}
r&=(-1)^{\binom{m}{2}+m_2m_3}V_{U_1}V_{U_2}V_{U_3}\prod_{i=1}^kr_i\prod_{v\in U_2,w\in U_3}(\beta -v+w)
\\\nonumber&\qquad \times\prod_{u\in U_1,v\in U_2}(\alpha+\beta +N+1+u-v)\prod_{u\in U_1,w\in U_3}(\alpha+N+1+u-w).
\end{align}
\end{lemma}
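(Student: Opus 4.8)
The plan is to analyze the determinant in \eqref{def2p} by column operations that isolate the leading behaviour in $x$, then divide out the explicit polynomial $p(x)$ in \eqref{def1p} and check that the quotient is genuinely a polynomial of the stated degree. First I would record the degrees and leading coefficients of the entries. In the block of rows indexed by $\UU_1$ the $(i,j)$-entry is $Y_i(x-j)$, a polynomial in $x$ of degree $u_i^{1\rceil}$ with leading coefficient $r_i$ (independent of $j$). In the $\UU_2$-block the entry is $s_{m-j}^{\beta+N+1}(x-j)\,s_{j-1}^{-\alpha+1}(x)\,Y_i(x-j)$, which has degree $(m-j)+(j-1)+u_i^{2\rceil} = m-1+u_i^{2\rceil}$; similarly the $\UU_3$-block entries have degree $m-1+u_i^{3\rceil}$. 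Thus the total degree of the expanded determinant, taken over the permutation $\sigma$ in $S_m$, is at most $\sum_{i}\deg(\text{row }i) = \sum_{u\in U_1}u + \sum_{v\in U_2}(m-1+v) + \sum_{w\in U_3}(m-1+w)$, and this bound is independent of $\sigma$ because each entry in a fixed row has the same degree. The degree of $p(x)$ in \eqref{def1p} is computed directly: the three products contribute $\binom{m_2}{2}$, $\binom{m_3}{2}$, and $\binom{m_2+m_3}{2}$ respectively, summing to $\binom{m_2}{2}+\binom{m_3}{2}+\binom{m_2+m_3}{2}$. Subtracting, and using $\binom{m_2+m_3}{2} = \binom{m_2}{2}+\binom{m_3}{2}+m_2m_3$ together with $(m_2+m_3)(m-1) = \binom{m}{2}-\binom{m_1}{2} - \text{(cross terms that must be reconciled)}$, one should land on the claimed $d = \sum_{u\in U_1,U_2,U_3}u - \binom{m_1}{2}-\binom{m_2}{2}-\binom{m_3}{2}$. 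I would double-check this bookkeeping carefully, since this is where an off-by-a-binomial error is easiest to make.

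Next I would extract the leading coefficient. Since every entry in row $i$ has the same degree in $x$, the coefficient of the top power $x^{\deg(\text{row }i)}$ in the $(i,j)$-entry is a constant $c_{i,j}$, and the leading coefficient of the determinant is $\det(c_{i,j})_{i,j=1}^m$, provided this determinant is nonzero (otherwise the true degree drops and we must go to subleading terms — the nonvanishing is exactly what hypothesis \eqref{yas0} will guarantee). For $i\in\UU_1$ we get $c_{i,j} = r_i$ for all $j$; to make the $\UU_1$-block nondegenerate I would instead keep the next-order term, writing $Y_i(x-j) = r_i x^{u_i} - (r_i u_i j + \text{const})x^{u_i-1}+\cdots$, so that after subtracting the leading column the $\UU_1$-rows contribute a Vandermonde-type factor $V_{U_1} = \prod_{u<u'\in U_1}(u'-u)$ in the degrees together with $\prod r_i$. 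For $i\in\UU_2$, the leading coefficient of $s_{m-j}^{\beta+N+1}(x-j) = (\beta+N+1-x+j)_{m-j}$ is $(-1)^{m-j}$, that of $s_{j-1}^{-\alpha+1}(x)=(-\alpha+1-x)_{j-1}$ is $(-1)^{j-1}$, so $c_{i,j} = (-1)^{m-1}r_i$, again column-independent; one again descends one order. The upshot is a factorization of $\det(c_{i,j})$-corrected-matrix into three generalized Vandermonde determinants $V_{U_1}V_{U_2}V_{U_3}$, an overall sign $(-1)^{\binom{m}{2}}$ from ordering the rows by degree, a cross-sign $(-1)^{m_2m_3}$ from interleaving the $\UU_2$ and $\UU_3$ blocks, the product $\prod r_i$, and — crucially — the cross-product factors $\prod_{v\in U_2,w\in U_3}(\beta-v+w)$, $\prod_{u\in U_1,v\in U_2}(\alpha+\beta+N+1+u-v)$, $\prod_{u\in U_1,w\in U_3}(\alpha+N+1+u-w)$ coming from pairing rows across different blocks. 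These last factors arise because the shifts by $-j$ in $x$ and the shift parameters $\beta+N+1$, $N+1$, $-\alpha+1$ couple the blocks; I expect they emerge most transparently by viewing the relevant minors as evaluations of a Vandermonde in shifted arguments, i.e. recognizing $\det\big((a_i - j)_{b-j}\big)$-type determinants.

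The remaining and most delicate point is that the quotient $P(x)$ in \eqref{def2p} is actually a polynomial, i.e.\ that $p(x)$ divides the determinant. I would handle this by showing that each linear factor of $p(x)$ is a root of the determinant with the correct multiplicity. For instance, the factor $(\beta+N+m-x-i)^{m_2-i-1}$ says that at $x = \beta+N+m-i$ the determinant vanishes to order $m_2-i-1$; this should follow because at such a value several of the Pochhammer symbols $s_{m-j}^{\beta+N+1}(x-j) = (\beta+N+1-x+j)_{m-j}$ acquire a zero factor simultaneously, forcing a linear dependence among enough columns within the $\UU_2$-block. Concretely, $(\beta+N+1-x+j)_{m-j}$ vanishes whenever one of its $m-j$ consecutive factors hits zero, and counting, for fixed $x = \beta+N+m-i$, how many $(i,j)$ entries vanish gives the multiplicity; an analogous argument covers the $(N+m-x-i)$ and $(-\alpha-x+i+1)$ factors, the latter pulling from the common factor $s_{j-1}^{-\alpha+1}(x)$ shared by both the $\UU_2$ and $\UU_3$ blocks (hence the exponent involves $m_2+m_3$). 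The hypothesis \eqref{yas0} ensures that no extra, unexpected cancellations occur — that the cross-block Vandermonde-type factors in the leading coefficient are nonzero, so the degree does not drop. I expect this divisibility/multiplicity count to be the main obstacle: it requires carefully tracking, for each root of $p$, exactly which entries of the determinant vanish and arguing the induced rank drop matches the prescribed exponent, and this is the step most analogous to (but more intricate than) the corresponding arguments in \cite{ddI0}. Once polynomiality, the degree $d$, and the leading coefficient $r$ are all established, the Lemma follows; I would close by remarking that the formula for $r$ is consistent with $P$ being identically a nonzero polynomial precisely under \eqref{yas0}.
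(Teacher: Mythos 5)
Your overall architecture matches the paper's: divisibility of the determinant by $p(x)$ is obtained exactly as in the paper, by observing that at each root $x_0$ of a factor of $p$ enough entries of a block vanish simultaneously to force $\operatorname{rank}Q(x_0)$ to drop by the prescribed amount, whence $(x-x_0)$ divides $\det Q$ with at least that multiplicity. That part of your proposal is sound and essentially identical to the paper's argument.

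The gap is in the degree and leading-coefficient computation, which is the technical heart of the lemma and which you leave as a plan with explicitly acknowledged uncertainty. Your row-degree bound overshoots the true degree of $\det Q$ by exactly $\binom{m}{2}$ (since within each row the top coefficients of the entries are column-independent up to the sign $(-1)^{m-1}$), and realizing the full drop of $\binom{m}{2}$ --- not just one order per row --- together with the precise cross-factors $\prod(\beta-v+w)$, $\prod(\alpha+\beta+N+1+u-v)$, $\prod(\alpha+N+1+u-w)$ in \eqref{mspcl} is precisely what your sketch does not deliver: ``subtracting the leading column'' and ``viewing the relevant minors as Vandermondes in shifted arguments'' is a hope, not an argument, for a determinant whose rows carry three different Pochhammer structures. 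The paper's key device, which you are missing, is to first reduce by multilinearity of the determinant in each row to the special choice of $Y_i$ given in \eqref{mpri} (falling factorials adapted to each block); for that choice the entries factor exactly via the Pochhammer identities
\begin{equation*}
s_{m-j}^{\beta+N+1}(x-j)\,s_{u}^{\beta+N+1-u}(x-j)=s_{u}^{\beta+N+m-u+1}(x)\,s_{m-j}^{\beta+N-u+1}(x-j),
\end{equation*}
so that a common row factor $f(x)$ can be pulled out and a single round of column subtractions reduces the matrix to one whose leading behaviour is a constant generalized-Vandermonde determinant; the cross-factors then appear as the explicit constants $(\alpha+\beta+N-u+1)_{j-1}$, $(\alpha+N-u+1)_{j-1}$ generated by those subtractions. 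Without this reduction (or an equivalent exact factorization), your plan for extracting $r$ does not close; in particular the claim that the construction ``should land on'' \eqref{ddp} and \eqref{mspcl} is exactly the step that needs proof, and hypothesis \eqref{yas0} enters only at the very end, to certify that the resulting constant determinant is nonzero so the degree does not drop below $d$.
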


\subsection{When are the polynomials $(\Cas_n^{Y_1,\ldots ,Y_m})_n$ orthogonal?} \label{ssi}
Only for a convenient choice of the polynomials $Y_j$, $j=1,\ldots, m$, the polynomials $(q_n)_n$ (\ref{qus}) are also orthogonal with respect to a measure. In \cite{ddI0}, Sect. 4, a method to check the orthogonality of the polynomials $(q_n)_n$ (\ref{qus}) was provided.
This tool assumed that the sequences $(\varepsilon_{n}^{h})_{n\in \ZZ }$, $h=1,\ldots ,m$, do not vanishes for any $n$. This is not the case for the dual Hahn polynomials, but, as we next show, that method can be modified to include also that case.

Indeed, given the sequences (in the integers) $(a_n)_{n\in \ZZ }$, $(b_n)_{n\in \ZZ }$, $(c_n)_{n\in \ZZ }$ and
$(\varepsilon_{n}^{h})_{n\in \ZZ }$, $h=1,\ldots ,m$,  and $m$ functions $\lambda _h$, $h=1,\ldots ,m$, assume we have for each $j\ge 0$ and $h=1,\ldots ,m$, one more sequence denoted by $(Z_{j}^{h}(n))_{n\in \ZZ }$ satisfying
\begin{equation}\label{relaR}
\varepsilon_{n+1}^{h}a_{n+1}Z_{j}^{h}(n+1)-b_nZ_{j}^{h}(n)+\frac{c_n}{\varepsilon_{n}^{h}}Z_{j}^{h}(n-1)=\lambda _h(j)Z_{j}^{h}(n),\quad n\in \ZZ,
\end{equation}
where, we assume that if for some $n_0$ and $h_0$, $\varepsilon_{n_0}^{h_0}=0$, then also $c_{n_0}=0$ and there is a number $d_{n_0}^{h_0}$  such that the identity (\ref{relaR}) still holds when we replace $c_{n_0}/\varepsilon_{n_0}^{h_0}$ by $d_{n_0}^{h_0}$.

Assume also that the polynomials $(p_n)_n$ are orthogonal with respect to a measure  and satisfy the three term recurrence relation ($p_{-1}=0$)
\begin{equation}\label{ttrr}
\lambda p_n(\lambda)=a_{n+1}p_{n+1}(\lambda)+b_np_n(\lambda)+c_np_{n-1}(\lambda), \quad n\ge 0.
\end{equation}
The measure $\mu$ might be degenerate, in which case for some $n_0$ we might have $a_{n_0}c_{n_0}=0$.
Define the auxiliary numbers $\xi _{n,i}^h$, $i\ge 0$, $n\in \ZZ $ and $h=1,\ldots ,m$, by
\begin{equation*}\label{defxi2}
\xi_{n,i}^h=\prod_{j=n-i+1}^{n}\varepsilon_{j}^{h},\quad i\ge 1,\quad \quad \xi_{n,0}^h=1,\quad\quad \xi_{n,i}^h=\frac{1}{\xi_{n-i,-i}^h},\quad i\leq-1.
\end{equation*}
For $i<0$, we take $\xi_{n,i}^h=\infty$ if $\xi_{n-i,-i}^h=0$ (what can happen if for some $n_0,h_0$, $\varepsilon_{n_0}^{h_0}=0$).
However, by assuming $\varepsilon_n^h\not =0$, $n\le 0$, $h=1,\cdots , m$, one can straightforwardly check that $\xi_{n,n+1}^h$ is finite for $n\le 0$.
Notice that for $x=n$, the number $\xi_{n,i}^h$ coincides with the number defined by (\ref{defxi}).


Given a $m$-tuple $G$ of $m$ positive integers, $G=(g_1,\ldots , g_m)$, assume that the $m$ numbers
\begin{equation}\label{diffn}
\tilde g_h=\lambda _h(g_h),\quad h=1,\ldots ,m,
\end{equation}
are different. Call $\tilde G=\{ \tilde g_1,\ldots , \tilde g_m\}$. Consider finally the $m\times m$ Casorati determinant $\Omega _G$ defined by
\begin{equation*}\label{casort}
\Omega_G (n)=\det \left(\xi_{n,m-j}^lZ_{g_l}^l(n-j)\right)_{l,j=1}^m.
\end{equation*}

We then define the sequence of polynomials $(q_n^G)_n$ by
\begin{equation}\label{quso}
q_n^G(\lambda)=\begin{vmatrix}
p_n(\lambda) & -p_{n-1}(\lambda) & \cdots & (-1)^mp_{n-m}(\lambda) \\
\displaystyle\xi_{n,m}^1Z_{g_1}^1(n) & \xi_{n-1,m-1}^1Z_{g_1}^1(n-1) & \cdots &
Z_{g_1}^1(n-m) \\
               \vdots & \vdots & \ddots & \vdots \\
              \xi_{n,m}^mZ_{g_m}^m(n) & \displaystyle
               \xi_{n-1,m-1}^mZ_{g_m}^m(n-1) & \cdots &Z_{g_m}^m(n-m)
             \end{vmatrix}.
\end{equation}
Notice that if for each $h=1,\ldots , m$,  $Y_h(n)=Z_{g_h}^h(n)$,  is a polynomial in $n$ and for some $M\in \NN$, $\Omega_G (n)\not =0$ for $0\le n\le M$, then the polynomials $q_n^G$ (\ref{quso})
fit into the definition of the polynomials (\ref{qus})  in Theorem \ref{Teor1}, and hence they are eigenfunctions of an operator in the algebra $\A$.

The key to prove that the polynomials $(q_n^G)_n$ are orthogonal with respect to a measure $\tilde \rho $ are the following formulas.
Assume that $\varepsilon_n^h\not =0$, $n\le 0$, $h=1,\cdots , m$ and that there exists a constant $c_G\not =0$ such that
\begin{align}\label{foeq1}
\langle \tilde \rho,p_n\rangle &=(-1)^n c_G\sum_{i=1}^{m}\frac{\xi_{n,n+1}^i Z_{g_i}^i(n)}{\p _{\tilde G}'(\tilde g_i)Z_{g_i}^{i}(-1)},\quad n\ge 0,\\
\label{foeq2}
0&=\sum_{i=1}^{m}\frac{Z_{g_i}^i(n)}{\p _{\tilde G}'(\tilde g_i)\xi_{-1,-n-1}^iZ_{g_i}^{i}(-1)},\quad 1-m\le n<0,\\
\label{foeq3}
0&\not =\sum_{i=1}^{m}\frac{Z_{g_i}^i(-m)}{\p _{\tilde G}'(\tilde g_i)\xi_{-1,m-1}^iZ_{g_i}^{i}(-1)},
\end{align}
where $\tilde g_h$ are the $m$ different numbers (\ref{diffn}) and $\p_{\tilde G}(x)=\prod_{j=1}^m(x-\tilde g_i)$.

We then have the following version of Lemma 4.2 of \cite{ddI0}.

\begin{lemma}[] \label{lort2} Assume that $\varepsilon_n^h\not =0$, $n\le 0$, $h=1,\cdots , m$, and that there exist $M,N$ (each of them can be either a positive integer or infinity) such that $a_{n}c_n\not =0$ for $1\le n\le N$ and
$\Omega_G (n)\not =0$ for $0\le n\le M$. Assume also that (\ref{foeq1}), (\ref{foeq2}) and (\ref{foeq3}) hold.
Then the polynomials $q_n^G$, $0\le n\le \min\{M-1,N+m\}$, are orthogonal with respect to $\tilde \rho$ and have non-null norm.
\end{lemma}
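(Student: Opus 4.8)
The plan is to adapt the proof of Lemma 4.2 in \cite{ddI0} to the present setting, the only genuinely new feature being that some of the sequences $(\varepsilon_n^h)_n$ are allowed to vanish at a positive integer, forcing us to handle the ``formal'' quotients $c_{n_0}/\varepsilon_{n_0}^{h_0}$ via the replacement constants $d_{n_0}^{h_0}$. First I would fix $G$ and define the candidate moment functional $\tilde\rho$ directly by \eqref{foeq1}, i.e. declare its moments through $\langle\tilde\rho,p_n\rangle=(-1)^nc_G\sum_i \xi_{n,n+1}^iZ_{g_i}^i(n)/(\p_{\tilde G}'(\tilde g_i)Z_{g_i}^i(-1))$ for $n\ge 0$, which is legitimate since $(p_n)_n$ is a basis of $\PP^\lambda$; here one uses that $\varepsilon_n^h\ne0$ for $n\le0$ so that $\xi_{n,n+1}^h$ is finite, as noted in the excerpt. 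The goal is then to show $\langle\tilde\rho, q_n^G q_k^G\rangle=0$ for $k<n\le\min\{M-1,N+m\}$ and $\langle\tilde\rho,(q_n^G)^2\rangle\ne0$ in the same range.

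The heart of the argument is to compute $\langle\tilde\rho, q_n^G\,\lambda^s\rangle$, equivalently $\langle\tilde\rho,q_n^G p_k\rangle$, for $0\le k$. Since $q_n^G$ is the determinant \eqref{quso}, expanding along its first row writes $q_n^G=\sum_{r=0}^m(-1)^r p_{n-r}(\lambda)A_r(n)$ where $A_r(n)$ are the $m\times m$ minors built from the $\xi Z$ entries. Applying $\langle\tilde\rho,\cdot\rangle$ and using the defining formula \eqref{foeq1} for $\langle\tilde\rho,p_{n-r}\rangle$ reduces everything to a sum over $i=1,\dots,m$ of $\xi$-weighted products of the $Z_{g_i}^i$ and the minors $A_r(n)$; the crucial recursion \eqref{relaR} (with the $d_{n_0}^{h_0}$-patched version wherever some $\varepsilon_{n_0}^{h_0}=0$) lets one telescope the index $k$ downwards. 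Concretely, one proves by induction on $k$, exactly as in \cite{ddI0}, that for $0\le k\le n-1$ this reduces to a determinant with two proportional rows (hence $0$), using \eqref{foeq2} to kill the boundary terms that arise when the recursion is run past $n=-1$ down to $n=-m$. The orthogonality $\langle\tilde\rho,q_n^Gp_k\rangle=0$ for $k<n$ follows; the ranges $n\le N+m$ and $n\le M-1$ are precisely what is needed so that $a_nc_n\ne0$ on the relevant window and $\Omega_G(n)\ne0$ so that $q_n^G$ genuinely has degree $n$. The non-vanishing of the norm $\langle\tilde\rho,(q_n^G)^2\rangle$ then comes out of the same computation at $k=n$: it equals (up to a nonzero constant involving $c_G$, $\Omega_G(n)$ and $\prod\p_{\tilde G}'(\tilde g_i)$) the quantity in \eqref{foeq3}, which is assumed nonzero.

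The step I expect to be the main obstacle is keeping the bookkeeping of the patched recursion \eqref{relaR} correct when one or several $\varepsilon_{n_0}^{h_0}$ vanish: in \cite{ddI0} the identity $\xi_{n,n+1}^h$ and the three-term relation are used freely with genuine reciprocals $1/\varepsilon_n^h$, and one must check that every place where such a reciprocal appears is either multiplied by a $c_n$ that also vanishes (so the product is read via $d_{n_0}^{h_0}$) or occurs only for $n\le0$ where $\varepsilon_n^h\ne0$ by hypothesis. In other words, the obstacle is not a new idea but a careful verification that the degenerate entries never actually get inverted in the course of the induction — once that is in place, the algebraic manipulation of determinants and the telescoping are identical to the nondegenerate case treated in \cite{ddI0}, and, as announced in the introduction, that routine part of the proof can be omitted or merely sketched.
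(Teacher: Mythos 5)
Your outline takes essentially the same route as the paper, which in fact omits this proof entirely and simply refers to Lemma 4.2 of \cite{ddI0}: expand the Casorati determinant (\ref{quso}) along its first row, telescope with the recursion (\ref{relaR}), and use (\ref{foeq1})--(\ref{foeq3}) to obtain orthogonality and non-null norm, exactly as you describe. You also correctly isolate the only genuinely new point relative to \cite{ddI0} --- namely that every occurrence of $1/\varepsilon_{n_0}^{h_0}$ with $\varepsilon_{n_0}^{h_0}=0$ comes multiplied by $c_{n_0}=0$ and is read via $d_{n_0}^{h_0}$, while for $n\le 0$ the hypothesis $\varepsilon_n^h\not=0$ keeps $\xi_{n,n+1}^h$ finite --- which is precisely the modification the paper's surrounding discussion sets up.
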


The proof is completely analogous to that of Lemma 4.2 of \cite{ddI0} and it is omitted.

\subsection{Finite sets of positive integers}
We still need a  last ingredient for identifying the measure $\tilde \rho$ with respect to which the polynomials $(q_n^G)_n$ (\ref{quso}) are orthogonal.
The measures $\rho^{\F}_{\alpha,\beta,N}$ (\ref{mii}) in the Introduction  depends on certain finite sets $F_1, F_2$ and $F_3$ while the polynomials $(q_n^G)_n$ depend on the finite set $G$ (the degrees of the polynomials $Z$'s). The relationship between the sets $F$'s and $G$ will be given by the following transforms of finite sets of positive integers.

Consider the sets $\Upsilon$ and $\Upsilon _0$ formed by all finite sets of positive or nonnegative integers, respectively:
\begin{align*}
\Upsilon&=\{F:\mbox{$F$ is a finite set of positive integers}\} ,\\
\Upsilon_0&=\{F:\mbox{$F$ is a finite set of nonnegative integers}\} .
\end{align*}
We consider an involution $I$ in $\Upsilon$, and a family $J_h$, $h\ge 1$, of transforms from  $\Upsilon$ into  $\Upsilon _0$. For $F\in \Upsilon$ write $F=\{f_1,\ldots ,f_k\}$ with $f_i<f_{i+1}$, so that $f_k=\max F$. Then $I(F)$ and $J_h(F)$, $h\ge 1$, are defined by
\begin{align}\label{dinv}
I(F)&=\{1,2,\ldots, f_k\}\setminus \{f_k-f,f\in F\},\\ \label{dinv2}
J_h(F)&=\{0,1,2,\ldots, f_k+h-1\}\setminus \{f-1,f\in F\}.
\end{align}
Notice that $I$  is an involution: $I^2=Id$.

For the involution $I$, the bigger the holes in $F$ (with respect to the set $\{1,2,\ldots , f_k\}$), the bigger the involuted set $I(F)$.
Here it is a couple of examples
$$
I(\{ 1,2,3,\ldots ,k\})=\{ k\},\quad \quad I(\{1, k\})=\{ 1,2,\ldots, k-2, k\}.
$$
Something similar happens for the transform $J_h$ with respect to $\{0,1,\ldots , f_k+h-1\}$.

Notice that
$$
\max F=\max I(F), \quad h-1+\max F=\max J_h(F),
$$
and if $n_F$ denotes the cardinal of $F$, we also have
\begin{equation}\label{neci}
n_{I(F)}=f_k-n_F+1,\quad n_{J_h(F)}=f_k+h-n_F.
\end{equation}

For a trio $\F=(F_1,F_2,F_3)$ of finite sets of positive integers, we will write $F_i=\{ f_1^{ i\rceil },\cdots ,
f_{k_i}^{i\rceil}\}$, $i=1,2,3$, with
$f_j^{i\rceil}<f_{j+1}^{i\rceil}$ (the use of, for instance, $f_j^2$ to describe elements of $F_2$ is confusing because it looks like a square, this is the reason why we use the notation $f_j^{2\rceil}$).

\section{Dual-Hahn polynomials and their $\D$-operators}\label{sch}
We start with some basic definitions and facts about dual Hahn and Hahn polynomials, which we will need later.

For $\alpha $ and $\beta$ real numbers, we write $\lambda^{\alpha,\beta}(x)=x(x+\alpha+\beta+1)$. To simplify the notation we sometimes write $\lambda(x)=\lambda^{\alpha,\beta}(x)$.

For $\alpha \not =-1,-2,\cdots $ we write $(R_{n}^{\alpha,\beta,N})_n$ for the sequence of dual Hahn polynomials defined by
\begin{equation}\label{Mxpol}
R_{n}^{\alpha,\beta,N}(\lambda)=\frac{1}{n!}\sum _{j=0}^n(-1)^j\frac{(-n)_j(-N+j)_{n-j}}{(\alpha+1)_jj!}\prod_{i=0}^{j-1}(\lambda-i(\alpha+\beta+1+i))
\end{equation}
(we have taken a slightly different normalization from the one used in \cite{KLS}, pp, 234-7 from where
the next formulas can be easily derived). Notice that $R_{n}^{\alpha,\beta,N}$ is always a polynomial in $\lambda$ of degree $n$
and leading coefficient equal to $1/((\alpha+1)_nn!)$.
Using that $\prod_{i=0}^{j-1}(\lambda^{\alpha,\beta}(x)-i(\alpha+\beta+1+i))=(-x)_j(x+\alpha+\beta+1)_j$, we get the hypergeometric representation
$$
R_{n}^{\alpha,\beta,N}(\lambda^{\alpha,\beta}(x))=\frac{(-N)_n}{n!}\pFq{3}{2}{-n,-x,x+\alpha+\beta+1}{\alpha+1,-N}{1}.
$$
When $N$ is a positive integer then the polynomial $R_{n}^{\alpha,\beta,N}(\lambda)$ for $n\ge N+1$ is always divisible by $\prod_{i=0}^{N}(\lambda+i(-\alpha-\beta-1-i))$. Hence
\begin{equation}\label{mdc1}
R_{n}^{\alpha,\beta,N}(\lambda^{\alpha,\beta}(i))=0,\quad n\ge N+1, i=0,\cdots, N.
\end{equation}
Dual Hahn polynomials are eigenfunctions of the  second order difference operator
\begin{equation}\label{defbc}
\Gamma=B(x)\Sh_{x,1}-(B(x)+D(x))\Sh_{x,0}+D(x)\Sh_{x,-1},\quad \Gamma(R_n)=nR_n,
\end{equation}
where
\begin{align*}
B(x)&=-\frac{(x+\alpha+1)(x+\alpha+\beta+1)(N-x)}{(2x+\alpha+\beta+1)(2x+\alpha+\beta+2)},\\
D(x)&=-\frac{x(x+\alpha+\beta+N+1)(x+\beta)}{(2x+\alpha+\beta)(2x+\alpha+\beta+1)}
\end{align*}
(to simplify the notation we remove the parameters in some formulas). As in Section 3, the shift operators in $\PP^\lambda$ act in $x$: $\Sh_{x,j}(p)=p(\lambda(x+j))$.
In particular, this implies that $\Gamma\in \A^\lambda$, where $\A^\lambda$ is the algebra defined by (\ref{defal}).

Dual Hahn polynomials satisfy the three term recurrence formula ($R_{-1}=0$)
\begin{equation}\label{Mxttrr}
\lambda R_n(\lambda)=a_{n+1}R_{n+1}(\lambda)+b_nR_n(\lambda)+c_nR_{n-1}(\lambda), \quad n\ge 0
\end{equation}
where
\begin{align*}
a_n&=n(n+\alpha),\\
b_n&=-(n+\alpha+1)(n-N)-n(n-\beta-N-1),\\
c_n&=(n-\beta-N-1)(n-N-1).
\end{align*}
Hence, when $N$ is not a positive integer and $\alpha ,-\beta-N-1 \not =-1,-2,\cdots $, they are always orthogonal with respect to a moment functional $\rho_{\alpha,\beta,N}$. When $N$ is a positive integer and $\alpha ,\beta \not =-1,-2,\cdots -N $, $\alpha+\beta \not=-1,\cdots, -2N-1$, we have
\begin{equation*}\label{MXw}
\rho_{\alpha,\beta,N}=\sum _{x=0}^N w_{*;\alpha,\beta,N}(x)\delta_{x},
\end{equation*}
where
\begin{equation}\label{masdh}
w_{*;\alpha,\beta,N}(x)=\frac{(2x+\alpha+\beta+1)(\alpha+1)_x(-N)_xN!}{(-1)^x(x+\alpha+\beta+1)_{N+1}(\beta+1)_xx!},
\end{equation}
and
\begin{equation}\label{norme}
\langle R_n^{\alpha,\beta,N}(\lambda),R_n^{\alpha,\beta,N}(\lambda)\rangle =\frac{(-N)_n^2}{n!^2\binom{\alpha+n}{n}\binom{\beta+N-n}{N-n}},\quad  n\in \NN .
\end{equation}
Notice that $\langle R_n^{\alpha,\beta,N},R_n^{\alpha,\beta,N}\rangle\not =0$ only for $0\le n\le N$.
The moment functional $\rho_{\alpha,\beta,N}$ can be represented by either a positive or a negative measure only when $N$ is a positive integer
and either $-1<\alpha,\beta$ or $\alpha,\beta<-N$, respectively.

Dual Hahn polynomials satisfy the following identities
\begin{align}\label{dhmp1}
\sum_{j=0}^nR_{j}^{\alpha,\beta,N}(\lambda)&=R_{n}^{\alpha,\beta,N-1}(\lambda),\\
\label{dhmp2}
\Delta_x R_{n}^{\alpha,\beta,N}(\lambda(x))&=\frac{2x+\alpha+\beta+2}{\alpha+1}R_{n-1}^{\alpha+1,\beta,N-1}(\lambda^{\alpha+1,\beta}(x)),\\
\label{dhmp3}
(1-t)^{N-x}\pFq{2}{1}{-x,-x-\beta}{\alpha+1}{t}&=\sum_{n=0}^\infty R_{n}(\lambda(x))t^n,\quad  N\not \in \NN, x\in \NN, \vert t\vert <1.
\end{align}
\bigskip
For $\alpha+\beta \not =-1,-2,\cdots $ we write $(h_{n}^{\alpha,\beta,N})_n$ for the sequence of Hahn polynomials defined by
\begin{equation}\label{hxpol}
h_{n}^{\alpha,\beta,N}(x)=\sum _{j=0}^n\frac{(-n)_j(n+\alpha+\beta+1)_j(-N+j)_{n-j}(\alpha+j+1)_{n-j}(-x)_j}{j!}
\end{equation}
(we have taken a slightly different normalization from the one used in \cite{KLS}, pp, 234-7). Notice that $h_{n}^{\alpha,\beta,N}$ is always a polynomial of degree $n$.
A straightforward computation shows the hypergeometric representation
$$
h_{n}^{\alpha,\beta,N}(x)=(-N)_n(\alpha+1)_n\pFq{3}{2}{-n,-x,n+\alpha+\beta+1}{\alpha+1,-N}{1}.
$$
Hahn polynomials satisfy the following second order difference equation
\begin{equation}\label{hdeq}
B(x)h_n(x+1)-(B(x)+D(x))h_n(x)+D(x)h_n(x-1)=\lambda(n)h_n,
\end{equation}
where
\begin{align*}
B(x)&=(x+\alpha+1)(x-N),\\
D(x)&=x(x-\beta-N-1).
\end{align*}
\bigskip

In the following Lemma (which will be proved in Section \ref{sproofs}), we include the three $\D$-operators we have found for dual Hahn polynomials.

\begin{lemma}\label{dhdo}
The sequences given by
\begin{align}
\label{eps1}\varepsilon_{n,1}&=-1,\\
\label{eps2}\varepsilon_{n,2}&=\frac{\beta+N-n+1}{\alpha +n},\\
\label{eps3}\varepsilon_{n,3}&=\frac{N-n+1}{\alpha +n},
\end{align}
define three $\mathcal{D}$-operators (see \eqref{Dh}) for the dual Hahn polynomials and the algebra $\A^\lambda$ of operators defined by
\eqref{defal}. More precisely
\begin{align*}
\mathcal{D}_1&=-\frac{(x+\alpha+1)(x+\alpha+\beta+1)}{(2x+\alpha+\beta+1)(2x+\alpha+\beta+2)}\Delta_x+
\frac{x(x+\beta)}{(2x+\alpha+\beta)(2x+\alpha+\beta+1)}\nabla_x,\\
\mathcal{D}_2&=\frac{(x+\alpha+1)(N-x)}{(2x+\alpha+\beta+1)(2x+\alpha+\beta+2)}\Delta_x+
\frac{(x+\beta)(x+\alpha+\beta+N+1)}{(2x+\alpha+\beta)(2x+\alpha+\beta+1)}\nabla_x,\\
\mathcal{D}_3&=\frac{(x+\alpha+\beta+1)(N-x)}{(2x+\alpha+\beta+1)(2x+\alpha+\beta+2)}\Delta_x+
\frac{x(x+\alpha+\beta+N+1)}{(2x+\alpha+\beta)(2x+\alpha+\beta+1)}\nabla_x.
\end{align*}
\end{lemma}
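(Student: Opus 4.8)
The plan is to establish, for each $i\in\{1,2,3\}$, the two halves of the statement separately: that the displayed operator $\mathcal{D}_i$ belongs to the algebra $\A^\lambda$, and that it acts on the dual Hahn polynomials by the rule \eqref{Dh} with the sequence $\varepsilon_{n,i}$.

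For the first half I would use the involution $\I$ of \eqref{definvsr}. Under the substitution $x\mapsto -(x+\alpha+\beta+1)$ one has $x+1\mapsto-((x-1)+\alpha+\beta+1)$, so $\I$ conjugates $\Sh_{x,1}$ to $\Sh_{x,-1}$ and hence $\I\Delta_x\I=-\nabla_x$, $\I\nabla_x\I=-\Delta_x$. Consequently an operator $T=h^{+}(x)\Delta_x+h^{-}(x)\nabla_x$ with rational coefficients satisfies $\I T\I=-h^{-}(x^{*})\Delta_x-h^{+}(x^{*})\nabla_x$ with $x^{*}=-(x+\alpha+\beta+1)$; and since every element of $\PP^\lambda$ is fixed by $\I$, such a $T$ maps $\PP^\lambda$ into itself (and is of the form \eqref{doho}, hence lies in $\A^\lambda$) as soon as $h^{+}(x)=-h^{-}(x^{*})$. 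For each of $\mathcal{D}_1,\mathcal{D}_2,\mathcal{D}_3$ this is a one-line substitution in the rational coefficients; for instance, in $\mathcal{D}_3$ the substitution turns the $\nabla_x$-coefficient $x(x+\alpha+\beta+N+1)/[(2x+\alpha+\beta)(2x+\alpha+\beta+1)]$ into $-(x+\alpha+\beta+1)(N-x)/[(2x+\alpha+\beta+1)(2x+\alpha+\beta+2)]=-h_3^{+}(x)$, and similarly for $\mathcal{D}_1$ and $\mathcal{D}_2$.

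For the second half I would use the following characterization, which comes from writing \eqref{Dh} out on the basis $(p_n)_n$: if $T$ is a lowering operator for $(p_n)_n$ (so $T(p_n)\in\langle p_0,\dots,p_{n-1}\rangle$) and $D_p(p_n)=np_n$, then $T$ is the $\D$-operator attached to a sequence $(\varepsilon_n)_n$ if and only if, as operators, $D_pT-TD_p=T^2-T$; indeed, comparing the coefficient of $p_{n-l}$ on the two sides of this identity applied to $p_n$ yields the recursion $-(l-1)\beta_{n,l}=\sum_{j=1}^{l-1}\beta_{n,j}\beta_{n-j,l-j}$ for the expansion coefficients $\beta_{n,l}$ of $T(p_n)$, which forces $\beta_{n,l}=(-1)^{l+1}\varepsilon_n\cdots\varepsilon_{n-l+1}$ with $\varepsilon_n:=\beta_{n,1}$. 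Applying this with $p_n=R_n^{\alpha,\beta,N}$ and $D_p=\Gamma$ (the second order operator \eqref{defbc}, for which $\Gamma(R_n)=nR_n$), I would: (a) check that each $\mathcal{D}_i$ is a lowering operator and identify its sequence — using the forward difference formula \eqref{dhmp2} together with $\nabla_x=\Sh_{x,-1}\Delta_x$, one rewrites $\mathcal{D}_i(R_n^{\alpha,\beta,N})(\lambda(x))$ as an explicit rational combination of $R_{n-1}^{\alpha+1,\beta,N-1}$ evaluated at $\lambda^{\alpha+1,\beta}(x)$ and at $\lambda^{\alpha+1,\beta}(x-1)$, an expression that is a priori of degree $2n-1$ in $x$, but membership in $\PP^\lambda$ forces the $x^{2n-1}$ terms to cancel, so the $\lambda$-degree is at most $n-1$, and the coefficient of $x^{2n-2}$ identifies the coefficient of $R_{n-1}^{\alpha,\beta,N}$ as exactly $\varepsilon_{n,1}=-1$, $\varepsilon_{n,2}=(\beta+N-n+1)/(\alpha+n)$, $\varepsilon_{n,3}=(N-n+1)/(\alpha+n)$; and (b) verify the operator identity $\Gamma\mathcal{D}_i-\mathcal{D}_i\Gamma=\mathcal{D}_i^2-\mathcal{D}_i$ in $\A^\lambda$. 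Both sides of (b) are difference operators supported on $\Sh_{x,-2},\dots,\Sh_{x,2}$, so (b) reduces to checking that at most five rational functions of $x$ (with parameters $\alpha,\beta,N$) coincide, a finite computation straight from the explicit coefficients of $\Gamma$ and of $\mathcal{D}_i$. Steps (a) and (b) together with the characterization give the claim. Alternatively, one can follow \cite{ddI0} more literally: for $\mathcal{D}_1$ the summation formula \eqref{dhmp1} collapses \eqref{Dh} to the single identity $\mathcal{D}_1(R_n^{\alpha,\beta,N})=-R_{n-1}^{\alpha,\beta,N-1}$, and the claim reduces to one contiguous relation checked termwise on the hypergeometric series \eqref{Mxpol}; $\mathcal{D}_2$ and $\mathcal{D}_3$ are handled the same way, or via the generating function \eqref{dhmp3}.

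The main obstacle is step (b), or its analogue in the alternative argument: because one works in the quadratic lattice $\lambda(x)=x(x+\alpha+\beta+1)$ and with three parameters, the rational identities to verify are considerably bulkier than their Charlier, Meixner and Krawtchouk counterparts in \cite{ddI0}, which is why the detailed check is deferred to Section \ref{sproofs}. A point worth noting is that $\varepsilon_{n,2}$ and $\varepsilon_{n,3}$ vanish at $n=\beta+N+1$ and $n=N+1$ respectively; this causes no trouble, since \eqref{Dh} and the identities above hold as identities of rational functions in $\alpha,\beta,N$ (equivalently of polynomials once $n$ is fixed), so the degenerate values are obtained by specialization.
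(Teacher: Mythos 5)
Your proposal is correct, and for the first half (membership of $\mathcal{D}_i$ in $\A^\lambda$) it is exactly the paper's argument: the paper also uses the involution $\I$ of \eqref{definvsr}, the fact that $\I(\Delta_x p)=-\nabla_x p$, and the coefficient identity $h^+(x)=-h^-(-(x+\alpha+\beta+1))$, checked case by case. For the second half the paper follows what you list as the \emph{alternative} route: for $\mathcal{D}_1$ it uses \eqref{dhmp2} (with $\nabla_x=\Sh_{x,-1}\Delta_x$) to write $\mathcal{D}_1(R_n^{\alpha,\beta,N})$ in terms of $R_{n-1}^{\alpha+1,\beta,N-1}$ at $\lambda^{\alpha+1,\beta}(x)$ and $\lambda^{\alpha+1,\beta}(x-1)$, uses \eqref{dhmp1} to collapse the defining sum \eqref{defTo} to the single identity $\mathcal{D}_1(R_n^{\alpha,\beta,N})=R_n^{\alpha,\beta,N}-R_n^{\alpha,\beta,N-1}=-R_{n-1}^{\alpha,\beta,N-1}$, and then verifies that identity via the generating function \eqref{dhmp3} and the power expansion of the ${}_2F_1$ (the cases $\mathcal{D}_2,\mathcal{D}_3$ are omitted as similar). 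Your primary route — the characterization of $\D$-operators by ``$T$ lowering and $\Gamma T-T\Gamma=T^2-T$'', whose coefficient recursion $-(l-1)\beta_{n,l}=\sum_{j=1}^{l-1}\beta_{n,j}\beta_{n-j,l-j}$ you correctly derive and which does force $\beta_{n,l}=(-1)^{l+1}\varepsilon_n\cdots\varepsilon_{n-l+1}$ — is genuinely different and sound: since operators supported on $\Sh_{x,-2},\dots,\Sh_{x,2}$ that agree on $\PP^\lambda$ must agree coefficientwise (a Vandermonde argument in the generically distinct values $\lambda(x+j)$), the verification reduces to a single $n$-independent identity of five rational functions, plus the leading-coefficient computation that identifies $\varepsilon_{n,i}=\beta_{n,1}$. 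What your route buys is a uniform, finite criterion that treats all three operators identically and avoids generating functions; what it costs is a bulkier rational-function computation on the quadratic lattice (composing $\Gamma$ with $\mathcal{D}_i$ and squaring $\mathcal{D}_i$), whereas the paper's telescoping via \eqref{dhmp1} makes the $\mathcal{D}_1$ case essentially a one-line contiguous relation. Both defer the final algebraic check, so neither is more complete than the other on that score.
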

We can apply Theorem \ref{Teor1} to produce from arbitrary polynomials $Y_j$, $j\ge 0$, a large class of sequences of polynomials $(q_n)_n$ satisfying higher order difference equations. But only for a convenient choice of the polynomials $Y_j$, $j\ge 0$, these polynomials $(q_n)_n$ are also orthogonal with respect to a measure. As we wrote in the Introduction, when the sequence $(p_n)_n$ is the dual Hahn polynomials a very nice symmetry between the family $(p_n)_n$ and the polynomials $Y_j$'s appears. Indeed, the polynomials $Y_j$ can be chosen as Hahn polynomials with parameters depending on the $\D$-operator $\D_h$. This symmetry is given by the recurrence relation (\ref{relaR}), where $Y_i=Z_i^h$.

\begin{lemma}\label{lemRme} Consider the Hahn polynomials
\begin{align*}
Z_{j}^{1}(x)&=h_j^{\beta+N+1,\alpha+N+1,-2-N}(-x-1),\quad j\ge 0,\\
Z_{j}^{2}(x)&=h_j^{-\alpha,-\beta,-2-N}(-x-1),\quad j\ge 0,\\
Z_{j}^{3}(x)&=h_j^{-\alpha,\beta,-\beta-2-N}(-x-1),\quad j\ge 0.
\end{align*}
Then they satisfy the recurrence (\ref{relaR}), where $(a_n)_{n\in \ZZ }$, $(b_n)_{n\in \ZZ }$, $(c_n)_{n\in \ZZ }$ are the sequences of coefficients in the three term recurrence relation for the dual Hahn polynomials $(R_n^{\alpha,\beta,N})_n$ (\ref{Mxttrr}) and
\begin{align*}
&\varepsilon_{n}^{1}=-1, \quad &\lambda_1(j)&=-\lambda^{\alpha,\beta}(j+N+1),\\
&\varepsilon_{n}^{2}=\frac{\beta+N-n+1}{\alpha +n}, \quad &\lambda_2(j)&=-\lambda^{\alpha,\beta}(-j-1),\\
&\varepsilon_{n}^{3}=\frac{N-n+1}{\alpha +n}, \quad &\lambda_3(j)&=-\lambda^{\alpha,\beta}(j-\alpha),\\
\end{align*}
respectively.
\end{lemma}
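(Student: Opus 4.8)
The plan is to verify directly that each $Z_j^h$, which is a Hahn polynomial evaluated at $-x-1$ with the indicated parameters, satisfies the three-term-type recurrence \eqref{relaR} with the stated $\varepsilon_n^h$ and $\lambda_h(j)$. The starting observation is that the Hahn polynomials $(h_n^{a,b,M})_n$ satisfy the second order difference equation \eqref{hdeq}: with the notation of that equation, $\tilde B(x)h_n(x+1)-(\tilde B(x)+\tilde D(x))h_n(x)+\tilde D(x)h_n(x-1)=\lambda^{a,b}(n)h_n(x)$, where $\tilde B(x)=(x+a+1)(x-M)$ and $\tilde D(x)=x(x-b-M-1)$. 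Substituting $x\mapsto -x-1$ turns $h_n(x+1)\mapsto h_n(-x)$, $h_n(x)\mapsto h_n(-x-1)$ and $h_n(x-1)\mapsto h_n(-x-2)$, i.e.\ in terms of the variable $n$ (recall $Z_j^h$ is viewed as a sequence in $n$, not $x$) this becomes a relation linking $Z_j^h(n-1)$, $Z_j^h(n)$, $Z_j^h(n+1)$; after relabelling it is exactly of the form
$$
\widehat{B}(n)Z_j^h(n+1)-(\widehat B(n)+\widehat D(n))Z_j^h(n)+\widehat D(n)Z_j^h(n-1)=\lambda_h(j)Z_j^h(n),
$$
with $\widehat B,\widehat D$ obtained from $\tilde B,\tilde D$ by the substitution and the specific choice of parameters $(a,b,M)$ listed for each $h$.

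Next I would match this with \eqref{relaR}. Writing out \eqref{relaR}, the claim is that for each $h$,
$$
\varepsilon_{n+1}^h a_{n+1}=\widehat B(n),\qquad -b_n=-(\widehat B(n)+\widehat D(n)),\qquad \frac{c_n}{\varepsilon_n^h}=\widehat D(n),
$$
where $a_n,b_n,c_n$ are the dual Hahn recurrence coefficients from \eqref{Mxttrr}: $a_n=n(n+\alpha)$, $b_n=-(n+\alpha+1)(n-N)-n(n-\beta-N-1)$, $c_n=(n-\beta-N-1)(n-N-1)$. So the proof reduces to three elementary polynomial identities per $\D$-operator. For $h=3$, say, with $\varepsilon_n^3=(N-n+1)/(\alpha+n)$ and $(a,b,M)=(-\alpha,\beta,-\beta-2-N)$, one computes $\varepsilon_{n+1}^3 a_{n+1}=\frac{N-n}{\alpha+n+1}\cdot(n+1)(n+1+\alpha)=(n+1)(N-n)$; on the other side $\widehat B(n)$ should work out (after the $x\mapsto -x-1$ substitution and relabelling $x\leftrightarrow n$) to the same quantity. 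Similarly $c_n/\varepsilon_n^3=(n-\beta-N-1)(n-N-1)\cdot\frac{\alpha+n}{N-n+1}$; since $(n-N-1)=-(N-n+1)$ this telescopes to $-(n-\beta-N-1)(\alpha+n)$, which must equal $\widehat D(n)$. The middle identity $\widehat B(n)+\widehat D(n)=b_n$ then follows automatically, or can be checked independently. The analogous computations for $h=1$ (where $\varepsilon_n^1=-1$, so the recurrence is genuinely $a_{n+1}Z(n+1)-b_nZ(n)+\ldots$, matching $(a,b,M)=(\beta+N+1,\alpha+N+1,-2-N)$) and $h=2$ (with $\varepsilon_n^2=(\beta+N-n+1)/(\alpha+n)$, $(a,b,M)=(-\alpha,-\beta,-2-N)$) proceed in the same way.

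There is one subtlety to handle, flagged already in the setup of \eqref{relaR}: the coefficient sequences $\varepsilon_n^3$ (and $\varepsilon_n^2$) vanish at $n=N+1$ (resp.\ $n=\beta+N+1$ when that is an integer), so the term $c_n/\varepsilon_n^h$ is a priori singular there. But $c_n=(n-\beta-N-1)(n-N-1)$ also vanishes at $n=N+1$ (and at $n=\beta+N+1$), so $c_{n_0}=0$ exactly when $\varepsilon_{n_0}^h=0$, and the quotient extends by continuity: one defines $d_{n_0}^h$ to be the limiting value $\lim_{n\to n_0} c_n/\varepsilon_n^h$ — for $h=3$ at $n_0=N+1$ this is $-(N+1-\beta-N-1)(\alpha+N+1)=\beta(\alpha+N+1)$ up to sign — and checks that \eqref{relaR} holds at $n_0$ with this value, which it does because $\widehat D(n_0)$ is precisely that limit. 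This is the only point requiring care; everything else is a routine verification of polynomial identities, and for $h=1$ it does not arise at all since $\varepsilon_n^1\equiv-1$.

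The values $\lambda_h(j)$ read off from the right-hand side of \eqref{hdeq}: the Hahn eigenvalue $\lambda^{a,b}(n)=n(n+a+b+1)$ becomes, after the substitution $x\mapsto -x-1$ and tracking which parameter plays the role of the degree index $j$, exactly $-\lambda^{\alpha,\beta}(j+N+1)$, $-\lambda^{\alpha,\beta}(-j-1)$, $-\lambda^{\alpha,\beta}(j-\alpha)$ for $h=1,2,3$ respectively — these are just bookkeeping identities in the parameters. Finally, since all the identities involved are identities between polynomials in $n$ with coefficients rational in $\alpha,\beta,N$, they hold for all $n\in\ZZ$, which is what \eqref{relaR} requires. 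The main obstacle is purely organizational: keeping the parameter substitutions straight across the three cases and confirming the degenerate-point adjustment, rather than any conceptual difficulty.
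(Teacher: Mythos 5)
Your proposal is correct and takes essentially the same route as the paper: the paper also obtains (\ref{relaR}) by setting $x=-n-1$ in the Hahn second-order difference equation (\ref{hdeq}) with the indicated parameters (it only writes out the case $h=1$ and leaves $h=2,3$, including the degenerate point, to the reader, so your explicit treatment of $c_{n_0}/\varepsilon_{n_0}^{h}$ at $n_0=N+1$ is a welcome addition). One small correction: the middle matching identity $\widehat B(n)+\widehat D(n)=b_n$ does \emph{not} hold exactly — the two sides differ by a constant in $n$ (for $h=1$ it is $(N+1)(\alpha+\beta+N+2)$, for $h=3$ it is $\alpha(\beta+1)$), and it is precisely this constant that converts the raw Hahn eigenvalue $\pm j(j+a+b+1)$ into the stated $\lambda_h(j)=-\lambda^{\alpha,\beta}(\cdot)$; your two outer-coefficient identities and the final eigenvalues are nonetheless correct, so this only affects the bookkeeping, not the validity of the argument.
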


\begin{proof}
We only prove the first case. The recurrence relation (\ref{relaR}) is then
\begin{align*}
-(n+1)&(n+1+\alpha)Z_{j}^{1}(n+1)+((n+\alpha+1)(n-N)+n(n-\beta-N-1))Z_{j}^{1}(n)\\
&-(n-\beta-N-1)(n-N-1)Z_{j}^{1}(n-1)=-\lambda^{\alpha,\beta}(j+N+1)Z_{j}^{1}(n)
\end{align*}
where $n\in \ZZ$.
But this follows straightforwardly by writing $x=-n-1$ in the second order difference equation (\ref{hdeq}) for the Hahn polynomials $(h_j^{\beta+N+1,\alpha+N+1,-2-N})_j$.
\end{proof}

\section{Bispectral dual Hahn polynomials}
In this section we put together all the ingredients showed in the previous Sections to construct bispectral dual Hahn polynomials.
Along this section we assume that $N$ is a positive integer. This condition in necessary for the existence of a positive weight for the dual Hahn polynomials, and only in this case we have an explicit expression of that weight. However, this condition is not needed in our construction and hence the results in this Section are also valid when $N$ is not a positive integer (once one has adapted the constrains on the parameters $\alpha$, $\beta$ and $N$).

Before stating the main result, we need some notation.
Given a trio $\U=(U_1,U_2,U_3)$ of finite sets of nonnegative integers, we will write $m_j$
for the number of elements of $U_j$, $j=1,2,3$, $m=m_1+m_2+m_3$ and
\begin{equation}\label{ppmm2}
\UU_1=\{1,\cdots, m_1\},\quad \UU_2=\{m_1+1,\cdots, m_1+m_2\},\quad \UU_3=\{m_1+m_2+1,\cdots, m\}.
\end{equation}
We write $U_j=\{u_i^{j\rceil},i\in \UU_j\}$.

Since we  have three $\D$-operators for dual Hahn polynomials, we make a partition of the  indices in Theorem \ref{Teor1}
and take
\begin{equation}\label{defvardh}
\varepsilon _n^h=\begin{cases} -1,& \mbox{for $h\in \UU_1$,}\\
\frac{\beta+N-n+1}{\alpha +n},& \mbox{for $h\in \UU_2$,}\\
\frac{N-n+1}{\alpha +n},& \mbox{for $h\in \UU_3$.}
 \end{cases}
\end{equation}
In particular, the auxiliary sequences of numbers $\xi_{x,i}^h$, $h=1,\cdots, m$, $i\in \ZZ$, (see (\ref{defxi})) are then the following
 rational functions of $x$
\begin{equation}\label{defxii}
\xi_{x,i}^h=\begin{cases}(-1)^i,& \mbox{for $h\in \UU_1$,}\\
\frac{(\beta+N-x+1)_i}{(\alpha+x-i+1)_i},& \mbox{for $h\in \UU_2$,}\\
\frac{(N-x+1)_i}{(\alpha+x-i+1)_i},&  \mbox{for $h\in \UU_3$.}
\end{cases}
\end{equation}
For $i\in \ZZ$, we finally write
$$
\ZZ_{i}=\{ j\in \ZZ:j\le i\}.
$$

We are now ready to establish  the main Theorem of this paper.

\begin{theorem}\label{t6.2} Let $\F=(F_1,F_2,F_3)$ be a trio of finite sets of positive integers (the empty set is allowed, in which case we take $\max F=-1$). For $h_1,h_3\ge 1$, consider the trio $\U=(U_1,U_2,U_3)$ whose elements are the transformed sets $J_{h_j}(F_j)=U_j=\{ u_{i}^{j\rceil}: i\in \UU_j\}$, $j=1,3$, and $I(F_2)=U_2=\{ u_{i}^{2\rceil}: i\in \UU_2\}$, where the involution $I$ and the transform $J_h$ are defined by (\ref{dinv}) and (\ref{dinv2}), respectively. Define $m=m_1+m_2+m_3$.
Let $\alpha$ and $\beta$ be real numbers satisfying
\begin{equation}\label{appm1}
\alpha\not \in \ZZ_{f_{2,M}+f_{3,M}+h_3}, \quad \beta\not \in \ZZ_{f_{2,M}},\quad \alpha+\beta\not\in \ZZ_{2f_{2,M}+1},
\end{equation}
where we denote by $f_{i,M}$ the maximum element in $F_i$, $i=1,2,3$.
In addition, we assume that
\begin{equation}\label{appm2}
\alpha+\beta-1\not \in \NN, \quad \mbox{if $F_2\not =\emptyset$,\quad and \quad  $\alpha-\beta-1\not \in \NN,$ \quad if $F_3\not =\emptyset$}.
\end{equation}
Consider the dual Hahn and Hahn polynomials $(R_n^{\alpha,\beta,N})_n$ (\ref{Mxpol}) and $(h_n^{\alpha,\beta,N})_n$ (\ref{hxpol}), respectively. Assume that $\Omega_{\alpha,\beta,N}^{\U} (n)\not =0$ for $0\le n\le N+m_1+m_2+1$ where the $m\times m$ Casorati determinant $\Omega _{\alpha,\beta,N}^{\U}$ is defined by
\begin{align}\label{defom}
\Omega _{\alpha, \beta, N}^\U(x)&=  \left|
  \begin{array}{@{}c@{}lccc@{}c@{}}
    & &&\hspace{-1.3cm}{}_{j=1,\ldots , m} \\
    \dosfilas{ (-1)^{j}h_{u}^{\beta +N+1,\alpha +N+1,-2-N}(-x+j-1) }{u\in U_1} \\
    \dosfilas{ \frac{(\beta +N-x+j+1)_{m-j}}{(\alpha+x-m+1)_{m-j}}h_{u}^{-\alpha,-\beta,-2-N}(-x+j-1) }{u\in U_2}
    \\
    \dosfilas{ \frac{(N-x+j+1)_{m-j}}{(\alpha+x-m+1)_{m-j}}h_{u}^{-\alpha,\beta,-\beta-2-N}(-x+j-1) }{u\in U_3}
  \end{array}\right|.
\end{align}
We then define the sequence of polynomials $q_n$, $n\ge 0$, by
\begin{equation}\label{qusch}
q_n(\lambda)=\left|
  \begin{array}{@{}c@{}lccc@{}c@{}}
   &(-1)^{j-1}R_{n+1-j}^{\alpha,\beta,N}(\lambda) &&\hspace{-1.3cm}{}_{j=1,\ldots , m+1} \\
    \dosfilas{ (-1)^{j-1}h_{u}^{\beta +N+1,\alpha +N+1,-2-N}(-n+j-2) }{u\in U_1} \\
    \dosfilas{ \frac{(\beta+N-n+j)_{m+1-j}}{(\alpha+n-m+1)_{m+1-j}}h_{u}^{-\alpha,-\beta,-2-N}(-n+j-2) }{u\in U_2}
    \\
    \dosfilas{ \frac{(N-n+j)_{m+1-j}}{(\alpha+n-m+1)_{m+1-j}}h_{u}^{-\alpha,\beta,-\beta-2-N}(-n+j-2) }{u\in U_3}
  \end{array}\right|
\end{equation}
Then

\noindent
(1) The polynomials $q_n$, $0\le n\le N+m_1+m_2$, are orthogonal and have non-null norm with respect to the measure
\begin{align*}
\tilde \rho^{\F,h_1,h_3}_{\alpha,\beta,N}&=\prod_{f\in F_1}(\lambda(x)-\lambda(N+f))\prod_{f\in F_2}(\lambda(x)-\lambda(-f_{2,M}-1+f))\\&\quad\quad \quad \times
\prod_{f\in F_3}(\lambda(x)-\lambda(f-\alpha-1)) \rho_{\tilde \alpha,\tilde \beta ,\tilde N}(x+f_{2,M}+1),
\end{align*}
where
\begin{equation}\label{abnt}
\tilde \alpha=\alpha -f_{2,M}-f_{3,M}-h_3-1,\quad \tilde \beta=\beta -f_{2,M}+f_{3,M}+h_3-1,
\quad \tilde N=N+f_{1,M}+f_{2,M}+h_1+1.
\end{equation}
(2) The polynomials $q_n$, $0\le n\le N+m_1+m_2$, are eigenfunctions of a higher order difference operator of the form (\ref{doho}) with
$$
-s=r=\sum_{f\in F_2}f-\sum_{f\in F_1,F_3}f-\sum_{i=1}^3\binom{k_i}{2}+k_1(f_{1,M}+h_1)+k_3(f_{3,M}+h_3)+1
$$
(which can be explicitly constructed using Theorem \ref{Teor1}).
\end{theorem}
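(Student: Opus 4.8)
The plan is to obtain the theorem by feeding the three $\D$-operators of Lemma \ref{dhdo}, the symmetry of Lemma \ref{lemRme}, and the combinatorics of the transforms $I$ and $J_h$ into Theorem \ref{Teor1} and Lemma \ref{lort2}. I take $(p_n)_n=(R_n^{\alpha,\beta,N})_n$, use the three $\D$-operators of Lemma \ref{dhdo} with the sequences $\varepsilon_n^h$ distributed as in \eqref{defvardh}, and choose the $Y_h$'s to be the Hahn polynomials $Z_{g_h}^h$ of Lemma \ref{lemRme}, letting the degree $g_h$ run over the elements of $U_1=J_{h_1}(F_1)$ for $h\in\UU_1$, of $U_2=I(F_2)$ for $h\in\UU_2$, and of $U_3=J_{h_3}(F_3)$ for $h\in\UU_3$. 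With the explicit $\xi_{x,i}^h$ of \eqref{defxii} one checks that, up to a nonzero constant, the Casorati determinant \eqref{casd1} equals the determinant $\Omega_{\alpha,\beta,N}^\U$ of \eqref{defom}, and the polynomials \eqref{qus} equal the $q_n$ of \eqref{qusch}; hence the standing hypothesis $\Omega_{\alpha,\beta,N}^\U(n)\ne 0$ for $0\le n\le N+m_1+m_2+1$ is exactly what is needed to invoke Theorem \ref{Teor1} with $M=N+m_1+m_2+1$ and Lemma \ref{lort2} with that $M$ and the parameter there also equal to $N$ (indeed $a_nc_n\ne 0$ for $1\le n\le N$ under \eqref{appm1}).

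For part (2), I would first produce a rational function $S$ for which $S\Omega$ and the functions $M_h$ of \eqref{emeiexp} are polynomials: by Lemma \ref{lgp2} this reduces to checking that the $\Psi_j^h$ are polynomials, a finite computation with Pochhammer symbols using \eqref{defxii} and the hypergeometric form of the Hahn polynomials, valid under \eqref{appm1}--\eqref{appm2}. With such an $S$ (chosen so as to make $S\Omega$ of minimal degree) Theorem \ref{Teor1} yields $D_{q,S}$, whose order as an operator of the form \eqref{doho} is $\deg P_S=1+\deg(S\Omega)$. I compute $\deg(S\Omega)$ by applying Lemma \ref{lgp1} to the trio $\U$: the Hahn polynomials $Z_{g_h}^h$ have the required degrees $u_i^{j\rceil}$ and nonvanishing leading coefficients (by \eqref{appm2}), and the conditions \eqref{yas0} are precisely what \eqref{appm1}--\eqref{appm2} deliver. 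Substituting $n_{J_h(F)}=f_k+h-n_F$, $n_{I(F)}=f_k-n_F+1$ and the attendant formulas for $\sum_{u\in J_h(F)}u$ and $\sum_{u\in I(F)}u$ into \eqref{ddp} and collapsing the binomial sums gives the stated value of $-s=r$.

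For part (1), I would invoke Lemma \ref{lort2}, so the substantive task is to exhibit a measure $\tilde\rho$ for which the moment identities \eqref{foeq1}--\eqref{foeq3} hold with $Z_{g_i}^i$ as in Lemma \ref{lemRme} and with $\p_{\tilde G}$ built from the numbers $\tilde g_h=\lambda_h(g_h)$. My candidate is $\tilde\rho^{\F,h_1,h_3}_{\alpha,\beta,N}$. To verify \eqref{foeq1} I would expand $\langle\tilde\rho,R_n^{\alpha,\beta,N}\rangle$ using the explicit dual Hahn weight \eqref{masdh}, the shift $x\mapsto x+f_{2,M}+1$, the factorization of the Christoffel polynomial, and the identities \eqref{dhmp1}--\eqref{dhmp3}, and then recognize the resulting finite sum, via the hypergeometric representations of $R_n^{\alpha,\beta,N}$ and $h_n$, as the right-hand side of \eqref{foeq1}; \eqref{foeq2} and \eqref{foeq3} are the boundary cases $1-m\le n\le-1$ and $n=-m$ of the same computation. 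The bookkeeping for the vanishing of $\varepsilon_{n,2}$ at $n=\beta+N+1$ and of $\varepsilon_{n,3}$ at $n=N+1$ (which in Lemma \ref{lort2} forces $c_{n_0}=0$ and requires the auxiliary constants $d_{n_0}^{h_0}$) is handled exactly as in the worked example of the Preliminaries, using $c_{N+1}=c_{\beta+N+1}=0$ in \eqref{Mxttrr} together with \eqref{mdc1}.

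I expect the main obstacle to be precisely this moment computation: reconciling in one stroke all three finite sets, the two shifts $h_1$ and $h_3$, the reparametrization $(\tilde\alpha,\tilde\beta,\tilde N)$ of \eqref{abnt}, and the asymmetry that $F_2$ is involuted while $F_1$ and $F_3$ are $J$-transformed. Because of the quadratic lattice and the presence of a third family this is heavier than the Charlier/Meixner/Krawtchouk case of \cite{ddI0}, but the structure is the same, so I would carry out the dual-Hahn-specific computation in detail and refer to \cite{ddI0} for the parts of the proof of Lemma \ref{lort2} that transfer verbatim. Combining Theorem \ref{Teor1} and Lemma \ref{lort2} finally gives the eigenfunction and orthogonality (with non-null norm) statements on the common range $0\le n\le\min\{M-1,N+m\}=N+m_1+m_2$.
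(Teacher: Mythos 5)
Your proposal follows essentially the same route as the paper: identify the Casorati determinant \eqref{casd1} built from the Hahn polynomials $Z_{g_h}^h$ of Lemma \ref{lemRme} with $\Omega_{\alpha,\beta,N}^{\U}$, verify the moment identities \eqref{foeq1}--\eqref{foeq3} for the candidate measure $\tilde\rho^{\F,h_1,h_3}_{\alpha,\beta,N}$ to invoke Lemma \ref{lort2} for part (1), and choose a rational $S$ so that Lemmas \ref{lgp2} and \ref{lgp1} together with Theorem \ref{Teor1} give the operator and its order for part (2). The paper handles the moment computation exactly as you anticipate (it states the identities with an explicit constant $c^{\F,h_1,h_3}_{\alpha,\beta,N}$ and defers the verification to the method of Theorem 1.1 of \cite{ddI0}), so your plan is correct and matches the paper's proof.
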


\begin{proof}
First of all, notice that we have performed a straightforward normalization of the polynomials $q_n$, $0\le n\le N+m_1+m_2$ (with respect to (\ref{qus})).

Notice that the assumption (\ref{appm1}) on the parameters $\alpha$ and $\beta$ implies that
$$
\tilde \alpha, \tilde \beta \not =-1,\cdots , -\tilde N, \tilde \alpha + \tilde \beta \not =-1,\cdots , -2\tilde N -1,
$$
and hence the dual Hahn weight $\rho_{\tilde \alpha,\tilde \beta ,\tilde N}(x+f_{2,M}+1)$ is well defined and its support is
$\{-f_{2,M}-1,\cdots ,N+f_{1,M}+h_1\}$. Using the assumptions (\ref{appm1}) on the parameters $\alpha$ and $\beta$, we deduce that the support of the measure $\tilde \rho^{\F,h_1,h_3}_{\alpha,\beta,N}$ is
$$
\{-f_{2,M}-1,\cdots ,N+f_{1,M}+h_1\}\setminus \Big((N+F_1)\cup (-f_{2,M}-1+F_2)\Big) .
$$
Notice that the support is formed by $N+f_{1,M}+f_{2,M}+h_1+2-k_1-k_2$ integers. Taking into account  that $U_1=J_{h_1}(F_1)$, $U_2=I(F_2)$ and (\ref{neci}) we get
$$
N+f_{1,M}+f_{2,M}+h_1+2-k_1-k_2=N+m_1+m_2+1.
$$
Before going on with the proof we comment on the assumption that $\Omega_{\alpha,\beta,N}^{\U} (n)\not =0$ for $0\le n\le N+m_1+m_2+1$. If $F_3\not =0$, since the sequence $\varepsilon _n^h$, $h\in \UU_3$, vanish for $n=N+1$, it is no difficult to see that $\Omega_{\alpha,\beta,N}^{\U} (n) =0$ for $N+m_1+m_2+2\le n\le N+m$ (the proof is similar to that of the first part of Lemma \ref{lgp1}). If $F_3=\emptyset$, the situation is different, and, except for exceptional values of the parameters $\alpha, \beta$ and $N$, we have $\Omega_{\alpha,\beta,N}^{\U} (n)\not =0$ for all $n\ge 0$.
In this cases, the polynomials $(q_n)_n$ are defined for all $n\ge 0$ and always have degree $n$ (in $\lambda$). However, it is not difficult to see that for $n\ge N+m_1+m_2+1$, the polynomial $q_n(\lambda(x))$ vanishes in the support of $\tilde \rho^{\F,h_1,h_3}_{\alpha,\beta,N}$. Hence it is still orthogonal with respect to this measures but has null norm.
This is completely analogous to the situation with the dual Hahn polynomials $R_n(\lambda)$, which are defined for all $n\ge 0$ (except when $\alpha =-1,-2,\cdots $) and always have degree $n$. But if $n\ge N+1$, they vanish in the support of its weight (see (\ref{mdc1})).

To prove (1) of the Theorem, we use the strategy of the Section \ref{ssi}.

We need some notation. Write $Z^h_j$, $h=1,\ldots ,m, j\ge 0$, for the polynomials
\begin{equation}\label{defz}
Z^h_j(x)=\begin{cases} h_j^{\beta+N+1,\alpha+N+1,-2-N}(-x-1),&h\in \UU_1,\\
h_j^{-\alpha,-\beta,-2-N}(-x-1),&h\in \UU_2,\\
h_j^{-\alpha,\beta,-\beta-2-N}(-x-1),&h\in \UU_3.\end{cases}
\end{equation}
The assumptions (\ref{appm1}) and (\ref{appm2}) on the parameters $\alpha$ and $\beta$ implies that these Hahn polynomials are well defined and have degree $j$.
Denote by $G$ and $\tilde G$ the $m$-tuples
\begin{align}\label{defgg}
G&=(u_{1}^{1\rceil},\ldots, u_{m_1}^{1\rceil},u_{1}^{2\rceil},\ldots, u_{m_2}^{2\rceil},u_{1}^{3\rceil},\ldots, u_{m_3}^{3\rceil})=(g_1,\ldots, g_m),\\\label{defggt}
\tilde G&=(\tilde g_1,\ldots, \tilde g_m),
\end{align}
where
$$
\tilde g_i=\begin{cases} -\lambda(u_{i}^{1\rceil}+N+1),& i\in \UU_1, \\-\lambda(-u_{i}^{2\rceil}-1),& i\in \UU_2,
\\-\lambda(u_{i}^{3\rceil}-\alpha),& i\in \UU_3. \end{cases}
$$
Finally, write $\p _{\tilde G}$ for the polynomial
$$
\p_{\tilde G}(x)=\prod_{u\in U_1}(x+\lambda(u+N+1))\prod_{u\in U_2}(x+\lambda(-u-1))\prod_{u\in U_3}(x+\lambda(u-\alpha)).
$$
It is easy to see that $\p_{\tilde G}$ has simple roots if and only if
\begin{align}\label{yas}
&\alpha+\beta +N+1+u-v,\alpha+N+1+u-w,\beta-v+w\not =0,\\\nonumber
&N+2+u+v,N+\beta +2+u+w,\alpha-1-v-w\not =0,
\end{align}
for $u\in U_1,v\in U_2,w\in U_3$. These constrains follow easily from the assumptions (\ref{appm1})  on the parameters $\alpha$ and $\beta$. Hence, $\p_{\tilde G}$ has simple roots.

Proceeding as in the proof of Theorem 1.1 in \cite{ddI0}, one can prove that
\begin{align}\label{foeq1ch}
c^{\F,h_1,h_3}_{\alpha,\beta,N}\langle \tilde \rho^{\F,h_1,h_3}_{\alpha,\beta,N},R_n^{\alpha,\beta,N}\rangle &=(-1)^n\sum_{i=1}^{m}\frac{\xi^i_{n,n+1}Z^i_{g_i}(n)}{\p _{\tilde G}'(\tilde g_i)Z^i_{g_i}(-1)},\quad n\ge 0,\\\label{foeq2ch}
\sum_{i=1}^{m}\frac{Z^i_{g_i}(n)}{\xi^i_{-1,-n-1}\p _{\tilde G}'(\tilde g_i)Z^i_{g_i}(-1)}&=0,\quad 1-m\le n<0,\\\label{foeq3ch}
\sum_{i=1}^{m}\frac{Z^i_{g_i}(-m)}{\xi^i_{-1,m-1}\p _{\tilde G}'(\tilde g_i)Z^i_{g_i}(-1)}&\not =0,
\end{align}
where $c^{\F,h_1,h_3}_{\alpha,\beta,N}$ is the constant independent of $n$ given by
$$
c^{\F,h_1,h_3}_{\alpha,\beta,N}=\frac{(-1)^{k_1+m_2+m_3-1}
(\beta-f_{2,M}+f_{3,M}+h_3)_{N+f_{2,M}-f_{3,M}-h_3+2}(N+1)!}
{(\alpha-f_{2,M}-f_{3,M}-h_3)_{f_{2,M}+f_{3,M}+h_3+1}
((N+f_{1,M}+f_{2,M}+h_1+1)!)^2},
$$
and  $\xi_{x,y}^i$ are defined by (\ref{defxii}).

From the recurrence relation for the dual Hahn polynomials, we get that $a_{n}c_n\not =0$ for $0\le n\le N$, but $c_{N+1}=0$. It is also easy to check that $\varepsilon_n^h\not =0$, $h=1,\cdots, m$, when $n$ is a negative integer.
Since we assume that $\Omega_{\alpha,\beta,N}^{\U} (n)\not =0$ for $0\le n\le N+m_1+m_2+1$, the orthogonality of the polynomials $q_n$, $0\le n\le N+m_1+m_2$, with respect to $\tilde \rho_{\alpha,\beta,N}^{\U}$ is now a consequence of the Lemmas \ref{lemRme}, \ref{lort2} and the identities (\ref{defvardh}) and (\ref{defxii}). They have also non-null norm.

We now prove (2) of the Theorem.
Using (\ref{defxii}), it is straightforward to see that $\Omega_{\alpha,\beta,N}^\U(x)$ coincides with the (quasi) Casorati determinant
$$
(-1)^{mm_1}\det \left(\xi_{x-j,m-j}^lY_l(x-j)\right)_{l,j=1}^m,
$$
where $Y_l(x)=Z^l_{g_l}(x)$, and $Z^l_j$, $l=1,\cdots, m, j\ge 0$, and $G=\{g_1,\cdots, g_m\}$ are defined by (\ref{defz}) and (\ref{defgg}), respectively.
Consider the particular case of the polynomial $P$ (\ref{def2p}) in Lemma \ref{lgp1}
for $Y_l(x)=Z^l_{g_l}(x)$ (and denote it again by $P$), and write $S$ for the rational function
\begin{equation}\label{umy}
S(x)=\frac{(-1)^{\binom{m}{2}+m_1}(\alpha+x-m+1)_{m-1}^{m_2+m_3}}{p(x)},
\end{equation}
where $p$ is the polynomial (\ref{def1p}) in Lemma \ref{lgp1}.
A simple computation shows that $S(x)\Omega_{\alpha,\beta,N}^\U(x)=P(x)$.

Write now $M_h$, $\Psi_j^h$, $h,j=1,\cdots , m$, for the rational functions
\begin{align*}
M_h(x)&=\sum_{j=1}^m(-1)^{h+j}\xi_{x,m-j}^hS(x+j)
\det\left(\xi_{x+j-r,m-r}^{l}Y_l(x+j-r)\right)_{l\in \II_h;r\in \II_j},\\
\Psi_j^h(x)&=\xi_{x-j,m-j}^hS(x)
\det\left(\xi_{x-r,m-r}^{l}Y_l(x-r)\right)_{l\in \II_h;r\in \II_j},
\end{align*}
where $\II_h=\{1,2,\ldots,m\}\setminus\{h\}$.
Again a simple computation using (\ref{defxii}) shows that $\Psi_j^h$, $h,j=1,\cdots , m$, are polynomials in $x$. Hence Lemma \ref{lgp2} gives that
$M_h$ is also a polynomial in $x$.

Since the sequences $(\varepsilon _n^h)_n$ (\ref{defvardh}) generate the $\D $-operators in Lemma \ref{dhdo} for the dual Hahn polynomials, we get, as a direct consequence of Theorem \ref{Teor1}, that the polynomials $q_n$, $0\le n\le N+m_1+m_2$, are eigenfunctions of a higher order difference operator $D_{q,S}$ in the algebra $\A^\lambda$ (\ref{defal}), explicitly given by (\ref{Dq}).

We now compute the order of $D_{q,S}$. Since $S\Omega _{\alpha, \beta, N}^\U=P$, Lemma  \ref{lgp1} gives that the degree of $S\Omega _{\alpha, \beta, N}^\U$ is $d=\sum_{u\in U_1,U_2,U_3}u-\sum_{i=1}^3\binom{m_i}{2}$ (notice that the assumption (\ref{yas0}) in Lemma \ref{lgp1} are just (\ref{yas}) above).
Hence the polynomial $P_S$ defined by $P_S(x)-P_S(x-1)=S(x)\Omega _{\alpha, \beta, N}^\U(x)$ has degree $d+1$.
Taking into account that the $m$-tuple $G$ (\ref{defgg}) is formed by the sets $J_{h_1}(F_1)$, $I(F_2)$ and $J_{h_3}(F_3)$, the definitions of the involution $I$ (\ref{dinv}) and the transform $J_h$ (\ref{dinv2}) give
\begin{align*}
&\sum_{u\in U_1,U_2,U_3}u-\sum_{i=1}^3\binom{m_i}{2}+1\\ &\quad \qquad=
\sum_{f\in F_2}f-\sum_{f\in F_1,F_3}f-\sum_{i=1}^3\binom{k_i}{2}+k_1(f_{1,M}+h_1)+k_3(f_{3,M}+h_3)+1\\&
\quad \qquad =r.
\end{align*}
That is, $P_S$ is a polynomial of degree $r$. Consider the coefficients $B$ and $D$ of $\Sh_{x,1}$ and $\Sh_{x,-1}$ in the second order difference operator $\Gamma$ for the Dual Hahn polynomials (\ref{defbc}). We then deduce that
the operator $P_S(\Gamma)$ has the form
$$
\sum _{l=-r}^{r}\tilde h_{l}(x)\Sh_{x,l},
$$
where $\tilde h_{r}(x)=u_1\prod_{j=0}^{r-1}B(x+j)$, $\tilde h_{-r}(x)=u_1\prod_{j=0}^{r-1}D(x-j)$ and $u_1$ denotes the leading coefficient of the polynomial $P_S$.  Using (\ref{defbc}), we deduce that both $\tilde h_{-r}$ and $\tilde h_r$ are rational functions whose numerators are polynomials of degree $3r$ and whose denominators are polynomials of degree $2r$.

Consider now the coefficients $\tilde B_h$ and $\tilde D_h$ of $\Delta_{x,1}$ and $\nabla_{x,-1}$ in any of the $\D$-operators $\D_h$ for the Dual Hahn polynomials (see Lemma \ref{dhdo}). Using Lemmas \ref{lgp2} and \ref{lgp1}, we can conclude that the polynomials $M_h$ (\ref{emeiexp}) have degree at most
$v_h=r-g_h$. Since $Y_h$ has degree $g_h$, this shows that the operator $M_h(\Gamma)\D_h Y_h(\Gamma)$ has the form
$$
\sum _{l=-r}^{r}\hat h_{l}(x)\Sh_{x,l},
$$
where
\begin{align*}
\tilde h_{r}(x)&=u_2u_3\tilde B_h(x+v_h)\prod_{j=0;j\not =v_h}^{r-1}B(x+j),\\
\tilde h_{-r}(x)&=u_2u_3\tilde D_h(x-v_h)\prod_{j=0;j\not =v_h}^{r-1}D(x-j),
\end{align*}
$u_2$ is the leading coefficient of $Y_h$ and
$u_3$ is the coefficient of $x^{v_h}$ in $M_h$. As before, we deduce that both $\hat h_{-r}$ and $\hat h_r$ are rational functions whose numerators are polynomials of degree $3r-1$ and whose denominators are polynomials of degree $2r$.

To complete the proof of (2) of the Theorem it is enough to take into account the expression of $D_{q,S}$ given by (\ref{Dq}).
\end{proof}

Notice that we can generate more higher order difference operators with respect to which the polynomials $(q_n)_n$ are eigenfunctions by choosing a polynomial $p$, considering the rational function $S_p=pS$, where $S$ is defined by (\ref{umy}), and proceeding as in the proof of (2) of the previous Theorem. We guess that using this approach one can generate the whole algebra of difference operators having the polynomials $(q_n)_n$ as eigenfunctions (except for some exceptional values of the parameters $\alpha,\beta $ and $N$).

\begin{corollary}\label{jodme}
Let $\F=(F_1,F_2,F_3)$  be a trio of finite sets of positive integers (the empty set is allowed, in which case we take $\max F=-1$). Let $\alpha$ and $\beta$ be real numbers satisfying
\begin{align*}
&\alpha, \alpha+\beta\not \in \ZZ_{-1}, \quad \beta\not \in \ZZ_{f_{3,M}},
\\
\alpha+\beta+2f_{2,M}+1\not \in \NN, \quad &\mbox{if $F_2\not =\emptyset$,\quad and \quad  $\alpha-\beta+2f_{3,M}+1\not \in \NN,$ \quad if $F_3\not =\emptyset$}.
\end{align*}
Consider the weight $\rho _{\alpha,\beta,N}^{\F}$ defined by
\begin{equation}\label{udspm}
\rho _{\alpha,\beta,N}^{\F}=\prod_{f\in F_1}(\lambda-\lambda(N-f))\prod_{f\in F_2}(\lambda-\lambda(f))\prod_{f\in F_3}(\lambda-\lambda(f-\beta))\rho _{\alpha,\beta,N},
\end{equation}
where $\rho _{\alpha,\beta,N}$ is the dual Hahn weight. Assume that
$$
\Omega_{\alpha+f_{2,M}+f_{3,M}+2,\beta+f_{2,M}-f_{3,M},
N-f_{1,M}-f_{2,M}-2}^{\U}(n)\not =0, \quad 0\le n\le N+m_1+m_2+1,
$$
where $\U=(I( F_1),I(F_2),I(F_3))$.
Then the measure $\rho _{\alpha,\beta,N}^{\F}$ has associated a sequence of orthogonal polynomials and they are eigenfunctions of a higher order difference operator of the form (\ref{doho}) with
$$
-s=r=\sum_{f\in F1,F_2,F_3}f-\binom{k_1}{2}-\binom{k_2}{2}-\binom{k_3}{2}+1
$$
(which can be explicitly constructed using Theorem \ref{Teor1}).
\end{corollary}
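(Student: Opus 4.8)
The plan is to derive the Corollary from Theorem~\ref{t6.2} by specializing the latter to a reflected trio of sets and to shifted parameters. For a nonempty $F\in\Upsilon$ put $\widehat F=\{\max F+1-f:f\in F\}\in\Upsilon$ (and $\widehat\emptyset=\emptyset$); then $\min\widehat F=1$ and $\max\widehat F=\max F+1-\min F$. I would apply Theorem~\ref{t6.2} to the trio $(\widehat{F_1},F_2,\widehat{F_3})$, with $h_1=\min F_1$ and $h_3=\min F_3$ (taking $h_i=1$ when $F_i=\emptyset$), and with its three free parameters replaced by
$$\alpha'=\alpha+f_{2,M}+f_{3,M}+2,\qquad \beta'=\beta+f_{2,M}-f_{3,M},\qquad N'=N-f_{1,M}-f_{2,M}-2 .$$

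The first task is a string of elementary verifications. The combinatorial heart is the identity $J_{\min F}(\widehat F)=I(F)$: both sides equal $\{0,1,\dots,\max F\}\setminus\{\max F-f:f\in F\}$, and $0$ drops out of the left-hand side because $\max F\in F$. Hence the trio $\U$ built inside Theorem~\ref{t6.2} out of $(\widehat{F_1},F_2,\widehat{F_3})$ and $h_1,h_3$ is precisely $(I(F_1),I(F_2),I(F_3))$, the trio occurring in the Corollary. Furthermore $\max\widehat{F_i}+h_i-1=f_{i,M}$, so $f'_{i,M}+h_i=f_{i,M}+1$ for $i=1,3$ while $f'_{2,M}=f_{2,M}$; plugging these into \eqref{abnt} gives $\tilde\alpha=\alpha$, $\tilde\beta=\beta$ and $\tilde N=N$. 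A direct substitution also transforms the constraints \eqref{appm1}--\eqref{appm2} for $\alpha',\beta'$ into exactly the hypotheses on $\alpha,\beta$ stated in the Corollary, and the nonvanishing of the Casorati determinant required by Theorem~\ref{t6.2} becomes precisely the assumption that $\Omega^{\U}_{\alpha+f_{2,M}+f_{3,M}+2,\;\beta+f_{2,M}-f_{3,M},\;N-f_{1,M}-f_{2,M}-2}(n)\neq 0$ for $0\le n\le N+m_1+m_2+1$.

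The core of the proof is to identify the measure $\tilde\rho$ supplied by Theorem~\ref{t6.2} for this data with the weight $\rho_{\alpha,\beta,N}^{\F}$. Since $\alpha'+\beta'=(\alpha+\beta)+2(f_{2,M}+1)$, the two quadratic lattices satisfy the affine identity
$$\lambda^{\alpha',\beta'}(x)=\lambda^{\alpha,\beta}(x+f_{2,M}+1)-\lambda^{\alpha,\beta}(f_{2,M}+1).$$
Writing $x'=x+f_{2,M}+1$, using $N'+f_{2,M}+1=N-f_{1,M}-1$ and the involution $\lambda^{\alpha,\beta}(z)=\lambda^{\alpha,\beta}(-z-\alpha-\beta-1)$, the three Christoffel factors of $\tilde\rho$ become, after the reflections $f=f_{i,M}+1-g$, the products $\prod_{g\in F_1}(\lambda-\lambda(N-g))$, $\prod_{g\in F_2}(\lambda-\lambda(g))$ and $\prod_{g\in F_3}(\lambda-\lambda(g-\beta))$, while the base weight $\rho_{\tilde\alpha,\tilde\beta,\tilde N}(x+f_{2,M}+1)=\rho_{\alpha,\beta,N}(x+f_{2,M}+1)$ is, in the variable $x'$, exactly $\rho_{\alpha,\beta,N}$; altogether $\tilde\rho$ coincides with $\rho_{\alpha,\beta,N}^{\F}$ up to an irrelevant nonzero constant. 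This is the general version of the computation carried out in the Preliminaries for the trio $\{1\},\{1\},\{1\}$, now done via the explicit mass formula \eqref{masdh}. I expect this identification to be the main obstacle: one must keep simultaneous control over the lattice reparametrization, the shift $x\mapsto x+f_{2,M}+1$, the three set reflections and the Pochhammer bookkeeping in the dual Hahn weight --- the kind of calculation the Introduction warns is more involved here than in \cite{ddI0}.

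Granted the identification, the orthogonal polynomials of $\rho_{\alpha,\beta,N}^{\F}$ coincide, up to the harmless constant translation of the argument coming from the lattice identity above, with the polynomials $q_n$ of \eqref{qusch} for the chosen data; hence by part~(2) of Theorem~\ref{t6.2} they are eigenfunctions of a difference operator of the form \eqref{doho}. Its order is then computed by substituting $k'_i=k_i$, $\sum_{f\in\widehat{F_i}}f=k_i(f_{i,M}+1)-\sum_{f\in F_i}f$ for $i=1,3$, $f'_{i,M}+h_i=f_{i,M}+1$, and $F_2'=F_2$ into the order formula of Theorem~\ref{t6.2}; this collapses it to $\sum_{f\in F_1,F_2,F_3}f-\binom{k_1}{2}-\binom{k_2}{2}-\binom{k_3}{2}+1$, as claimed. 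The degenerate cases $F_i=\emptyset$ are dealt with separately using the conventions $\max\emptyset=-1$ and $h_i=1$.
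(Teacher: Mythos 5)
Your proposal is correct and follows essentially the same route as the paper: the paper's own proof introduces $\tilde F_j=\{f_{j,M}-f+1:f\in F_j\}$ (your $\widehat{F_j}$) for $j=1,3$, takes $h_j=\min F_j$, verifies $J_{h_j}(\tilde F_j)=I(F_j)$, and identifies $\rho_{\alpha,\beta,N}^{\F}$ with $\tilde\rho^{\tilde\F,h_1,h_3}_{\alpha+f_{2,M}+f_{3,M}+2,\,\beta+f_{2,M}-f_{3,M},\,N-f_{1,M}-f_{2,M}-2}(x-f_{2,M}-1)$ before invoking Theorem \ref{t6.2}. You simply spell out the lattice and parameter bookkeeping that the paper dismisses as "easy to see," and all of your verifications check out.
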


\begin{proof}
If we write $\tilde F_j=\{f_{k_j}^{j\rceil}-f+1,f\in F_j\}$, $j=1,3$,  and $h_j=\begin{cases} \min F_j,& \mbox{if $F_j\not =\emptyset$,}\\
1,& \mbox{if $F_j\not =\emptyset$,}\end{cases}$ using (\ref{dinv}) and (\ref{dinv2}), one straightforwardly has $J_{h_j}(\tilde F_j)=I(F_j)$. Write now $\tilde \F=(\tilde F_1,F_2,\tilde F_3)$. It is now easy to see that
$$
\rho _{\alpha,\beta,N}^{\F}=\tilde \rho_{\alpha+f_{2,M}+f_{3,M}+2,\beta+f_{2,M}-f_{3,M},
N-f_{1,M}-f_{2,M}-2}^{\tilde \F,h_1,h_3}(x-f_{2,M}-1).
$$
The corollary is then a straightforward consequence of the previous Theorem.
\end{proof}

The hypothesis on $\Omega ^\U(n)\not =0$, for $0\le n\le N+m_1+m_2+1,$ in the previous Theorem and Corollary is then sufficient for the existence of a sequence of orthogonal polynomials with respect to the (possible signed) measure $\tilde \rho_{\alpha,\beta,N} ^{\F,h_1,h_3}$. We guess that this hypothesis is also necessary for the existence of such sequence of orthogonal polynomials.

\bigskip

Notice that there are different sets $F_1$ and $F_2$ for which the measures $\rho _{\alpha,\beta,N}^{\F}$
(\ref{udspm}) are equal. Each of these possibilities provides
a different representation for the orthogonal polynomials with respect to $\rho _{\alpha,\beta,N}^{\F}$ in the form (\ref{qusch}) and a different higher order difference operator with respect to which they are eigenfunctions. It is not difficult to see that only one of these possibilities satisfies the condition $f_{1,M},f_{2,M}<N/2$. This is the more interesting choice because it minimizes the order $2r$ of the associated higher order difference operator.
This fact will be clear with an example. Take $N=100$ and the measure $\mu=(x-1)(x-5)(x-68)\rho _{\alpha,\beta,N}$. There are eight couples of different sets $F_1$ and $F_2$ for which the measures $\mu$ and $\rho _{\alpha,\beta,N}^{\F}$ (\ref{udspm}) coincide (except for a sign). They are the following
\begin{align*}
F_1&=\{1,5,68\}, F_2=\emptyset, \quad &F_1&=\emptyset, F_2=\{31,94,98\},\\
F_1&=\{1,5\}, F_2=\{31\},\quad &F_1&=\{1,68\}, F_2=\{94\},\quad &F_1&=\{5,68\}, F_2=\{98\},\\
F_1&=\{1\}, F_2=\{31,94\},\quad &F_1&=\{5\}, F_2=\{31,98\},\quad &F_1&=\{68\}, F_2=\{94,98\}.
\end{align*}
Only one of these couples satisfies the assumption $f_{1,M},f_{2,M}<N/2$: $F_1=\{1,5\}$, $F_2=\{31\}$. Actually, it is easy to check that
 this couple
minimizes the number
$$
\sum_{f\in F1,F_2}f-\binom{k_1}{2}-\binom{k_2}{2}+1.
$$
Hence, it also minimizes de order $2r$ of the difference operator with respect to which the polynomials $(q_n)_n$ are eigenfunctions.

\section{Proofs of the Lemmas}\label{sproofs}
In this Section, we include the proofs of Lemmas \ref{lgp2}, \ref{lgp1} in Section 3 and Lemma \ref{dhdo} in Section 4.

\begin{proof}[Proof of the Lemma \ref{lgp2}]
To simplify the notation write $d_g=\deg (S(x)\Omega_g^h)$. By hypothesis, we have $d_g\le g+d_0$.
We can also write
\begin{equation}\label{spm0}
\Psi_j^h(x)=\sum_{i=0}^{\tilde d}a_i^{h,j}x^i.
\end{equation}
On the one hand, from the definition of $\Psi_j^h$, one has
\begin{equation}\label{spm1}
M_h(x)=\sum_{j=1}^m(-1)^{h+j}\Psi_j^h(x+j).
\end{equation}
On the other hand, by expanding the (quasi) Casorati determinant $\Omega _g^h$ by its $h$-row, we get
\begin{align*}
S(x)\Omega_g^h(x)&=S(x)\sum_{j=1}^m(-1)^{h+j}\xi_{x-j,m-j}^h(x-j+1)^g\det\left(\xi_{x-r,m-r}^{l}Y_l(x-r)\right)_{l\in \II_h;r\in \II_j}\\
&=\sum_{j=1}^m(-1)^{h+j}(x-j+1)^g\Psi_j^h(x).
\end{align*}
This shows that both $M_h$ and $S\Omega_g^h$ are polynomials in $x$. Moreover, the degree of $M_h$ is at most $\tilde d$. Hence, if $\tilde d\le d_0$, the proof is finished. We then assume that $\tilde d> d_0$.

Using that
$$
(x-j+1)^g=\sum_{v=0}^g(-1)^{g-v}\binom{g}{v}(x+1)^vj^{g-v},
$$
we get for $S\Omega_g^h$ the expansion
\begin{equation}\label{spm2}
S(x)\Omega_g^h(x)=(-1)^{h+g}\sum_{v=0}^g(-1)^{v}\binom{g}{v}(x+1)^v \sum_{j=1}^m(-1)^{j}j^{g-v}\Psi^h_j(x).
\end{equation}
We now prove by induction on $g$ that
\begin{equation}\label{spm3}
\sum_{j=1}^m(-1)^{j}j^ga_i^{h,j}=0,\quad \mbox{for $d_0+g<i\le \tilde d$.}
\end{equation}
Indeed, for $g=0$, the polynomial in the left hand side of (\ref{spm2}) has degree $d_0$, and the polynomial in the right hand side has degree at most $\tilde d$. The particular caso of (\ref{spm3}) for $g=0$ then follows from the expansion (\ref{spm0}). Assume now that (\ref{spm3}) holds for any nonnegative number $0,1,\cdots , g-1$.
Take now a number $i$ with $d_0+g<i\le \tilde d$. The induction hypothesis shows that for $v=1,\cdots , g$, then $\sum_{j=1}^m(-1)^{j}j^{g-v}a_i^{h,j}=0$. Hence the addends in the right hand side of (\ref{spm2}) corresponding to $v=1,\cdots , g$, have degree at most $d_0+g$. Since the polynomial in the left hand side of (\ref{spm2}) has degree at most $d_0+g$ as well, one can deduce that also the first addend ($v=0$) in the right hand side of (\ref{spm2}) has degree at most $d_0+g$. Using again (\ref{spm0}), we get that also $\sum_{j=1}^m(-1)^{j}j^{g}a_i^{h,j}=0$
for $d_0+g<i\le \tilde d$.

To finish the proof it is enough to  insert in (\ref{spm1}) the expansion of $\Psi_j^h$ and use (\ref{spm3}):
\begin{align*}
M_h(x)&=\sum_{j=1}^m(-1)^{h+j}\sum_{i=0}^{\tilde d}a_i^{h,j}(x+j)^i\\
&=(-1)^h\sum_{l=0}^{\tilde d}x^l\sum_{i=l}^{\tilde d}\binom{i}{l}\sum_{j=1}^m(-1)^{j}j^{i-l}a_i^{h,j}\\
&=(-1)^h\sum_{l=0}^{d_0}x^l\sum_{i=l}^{\tilde d}\binom{i}{l}\sum_{j=1}^m(-1)^{j}j^{i-l}a_i^{h,j}.
\end{align*}

\end{proof}

\begin{proof}[Proof of the Lemma \ref{lgp1}]
Consider the matrix
$$
Q(x)=\left(
  \begin{array}{@{}c@{}lccc@{}c@{}}
    &&&\hspace{-.9cm}{}_{1\le j\le m} \\
    \dosfilas{ Y_{i}(x-j) }{i\in \UU_1} \\
    \dosfilas{ s_{m-j}^{\beta+N+1}(x-j)s_{j-1}^{-\alpha+1}(x)Y_{i}(x-j) }{i\in \UU_2}\\
     \dosfilas{ s_{m-j}^{N+1}(x-j)s_{j-1}^{-\alpha+1}(x)Y_{i}(x-j) }{i\in \UU_3}
  \end{array}
  \hspace{-.4cm}\right),
$$
where $s_j^u(x)$ is the polynomials defined by (\ref{defsj}).

In order to prove that $P$ is a polynomial, it is enough to prove that
$$
x=\begin{cases} \beta+N+m-i,& i=0,\cdots, m_2-2,\\
N+m-i,& i=0,\cdots, m_3-2,\\
-\alpha+i+1,& i=0,\cdots, m_2+m_3-2,
\end{cases}
$$
are root of $\det Q(x)$ of multiplicities at least $m_2-i-1$, $m_3-i-1$ and $m_2+m_3-i-1$, respectively. It is easy to see that $\rank Q(\beta+N+m-i)\le m_1+m_3+i+1$. Hence $0$ is an eigenvalue of $Q(\beta+N+m-i)$ of geometric multiplicity at least $m-m_1-m_3-i-1=m_2-i-1$, and so $0$ is an eigenvalue of $Q(\beta+N+m-i)$ of algebraic multiplicity at least $m_2-i-1$. This implies that $x=\beta+N+m-i$ is a root of $\det Q(x)$ of multiplicity at least $m_2-i-1$. For the other values of $x$ the proof is similar.

The lemma follows now easily from the following claim: \textit{$\det Q(x)$ is a polynomial of degree $d+m_2^2+m_3^2+(m_2-1)(m_3-1)-1$, with leading coefficient given by $(-1)^{m_2^2+m_3^2+(m_2-1)(m_3-1)-1} r$, where $d$ and $r$ are defined by (\ref{ddp}) and (\ref{mspcl})}, respectively.

We now prove the claim.

Write $\phi_{\U}$ for the special case of the determinant $\det Q(x)$ when
\begin{equation}\label{mpri}
Y_i(x)=\begin{cases} (x+1)_{u_i^{1\rceil}},& i\in \UU_1,\\
(\beta+N-u_i^{2\rceil}-x+1)_{u_i^{2\rceil}},& i\in \UU_2,\\
(N-u_i^{3\rceil}-x+1)_{u_i^{3\rceil}},& i\in \UU_3.
\end{cases}
\end{equation}
We first prove that the  claim follows if we prove it
for $\phi_{\U}$.
Indeed, for arbitrary polynomials $Y_i$ with leading coefficient equal to $r _i$, we can write
$$
\det Q(x)=\left(\prod_{i=1}^kr_i\right)\phi_{\U}+\sum a_{\V}\phi_{\V},
$$
where the sum is taken over all the trios $\V=(V_1,V_2,V_3)$ satisfying that $0\le v_i^{j\rceil}\le u_i^{j\rceil}$, for $i \in \UU_j$, $j=1,2,3$, and at least for some $j$ and $i_0$ with $i_0\in \UU_j$,
$v_{i_0}^{j\rceil}< u_{i_0}^{j\rceil}$. The claim for $\det Q$ now follows easily.

We finally prove the claim for $\phi_{\U}$. From the definition of $Y_i$, we get
\begin{equation}\label{dpc}
\phi_{\U}(x)=\left|\begin{array}{@{}c@{}lccc@{}c@{}}
    &&&\hspace{-.9cm}{}_{1\le j\le m} \\
    \dosfilas{(x-j+1)_u }{u\in U_1} \\
    \dosfilas{s_{m-j}^{\beta+N+1}(x-j)s_{j-1}^{-\alpha+1}(x)s_{u}^{\beta+N+1-u}(x-j) }{u\in U_2}\\
     \dosfilas{ s_{m-j}^{N+1}(x-j)s_{j-1}^{-\alpha+1}(x)s_{u}^{N+1-u}(x-j) }{u\in U_3}
  \end{array}
  \hspace{-.4cm}\right|.
\end{equation}
Taking into account that
\begin{align*}
s_{m-j}^{\beta+N+1}(x-j)s_{u}^{\beta+N+1-u}(x-j) &=
s_{u}^{\beta+N+m-u+1}(x)s_{m-j}^{\beta+N-u+1}(x-j),\\
s_{m-j}^{N+1}(x-j)s_{u}^{N+1-u}(x-j) &=
s_{u}^{N+m-u+1}(x)s_{m-j}^{N-u+1}(x-j),
\end{align*}
we can rewrite (\ref{dpc})
\begin{equation}\label{dpc-}
\phi_{\U}(x)=f(x)\left|\begin{array}{@{}c@{}lccc@{}c@{}}
    &&&\hspace{-.9cm}{}_{1\le j\le m} \\
    \dosfilas{(x-j+1)_u }{u\in U_1} \\
    \dosfilas{s_{m-j}^{\beta+N-u+1}(x-j)s_{j-1}^{-\alpha+1}(x) }{u\in U_2}\\
     \dosfilas{s_{m-j}^{N-u+1}(x-j)s_{j-1}^{-\alpha+1}(x)}{u\in U_3}
  \end{array}
  \hspace{-.4cm}\right|,
\end{equation}
where $f(x)=\prod_{u\in U_2}s_{u}^{\beta+N+m-u+1}(x)\prod_{u\in U_3}s_{u}^{N+m-u+1}(x)$.
Subtracting columns in (\ref{dpc-}), we get
$$
\phi_{\U}(x)=(-1)^{\binom{m}{2}}f(x)\left|\begin{array}{@{}c@{}lccc@{}c@{}}
    &&&\hspace{-.9cm}{}_{1\le j\le m} \\
    \dosfilas{(u-j+2)_{j-1}(x)_{u-j+1} }{u\in U_1} \\
    \dosfilas{(\alpha+\beta+N-u+1)_{j-1}s_{m-j}^{\beta+N-u+1}(x-j) }{u\in U_2}\\
     \dosfilas{(\alpha+N-u+1)_{j-1}s_{m-j}^{N-u+1}(x-j)}{u\in U_3}
  \end{array}
  \hspace{-.4cm}\right|.
$$
From where it is easy to see that $\phi_{\U}$ is a polynomial of degree at most $\tilde d=d+m_2^2+m_3^2+(m_2-1)(m_3-1)-1$ with  coefficient of the power
$x^{\tilde d}$ equal to
$$
(-1)^{\binom{m}{2}+\sum_{u\in U_2,U_3}u}
 \left|\begin{array}{@{}c@{}lccc@{}c@{}}
    &&&\hspace{-.9cm}{}_{1\le j\le m} \\
    \dosfilas{(u-j+2)_{j-1} }{u\in U_1} \\
    \dosfilas{(-1)^{m-j}(\alpha+\beta+N-u+1)_{j-1} }{u\in U_2}\\
     \dosfilas{(-1)^{m-j}(\alpha+N-u+1)_{j-1}}{u\in U_3}
  \end{array}
  \hspace{-.4cm}\right|.
$$
The proof can be concluded by computing this determinant using standard determinantal techniques.
\end{proof}

\begin{proof}[Proof of the Lemma \ref{dhdo}]
We prove the Lemma only for $\mathcal D_1$ (the proof for the other $\D$-operators is similar and it is omitted).

Firs of all, we show that $\mathcal D_1\in \A^\lambda$.  We have to prove that for any polynomial $p(\lambda)$ we have $\D_1(p)\in \PP^\lambda$.
Using the characterization of $\PP^\lambda$ given in (\ref{definvsr}), it is enough to show that
$\I (\D_1(p))=\D_1(p)$, where $\I$ is the involution defined by (\ref{definvsr}). But this follows straightforwardly, taking into account that $\I (\Delta_x(p))=-\nabla_x(p)$ and
$$
\I \left(\frac{(x+\alpha+1)(x+\alpha+\beta+1)}{(2x+\alpha+\beta+1)(2x+\alpha+\beta+2)}\right)=\frac{x(x+\beta)}{(2x+\alpha+\beta)(2x+\alpha+\beta+1)}.
$$
Using the forward shift operator (\ref{eps2}) for the dual Hahn polynomials we get
\begin{align}\label{pss}
\D _1(R_n(\lambda(x))&=\frac{-(x+\alpha+1)(x+\alpha+\beta+1)R_{n-1}^{\alpha+1,\beta,N-1}(\lambda^{\alpha+1,\beta}(x))}
{(\alpha+1)(2x+\alpha+\beta+1)}\\
\nonumber &\hspace{2cm} +\frac{x(x+\beta)R_{n-1}^{\alpha+1,\beta,N-1}(\lambda^{\alpha+1,\beta}(x-1))}
{(\alpha+1)(2x+\alpha+\beta+1)}.
\end{align}
Using (\ref{defTo}), the definition of $\varepsilon_{n,1}$ (\ref{eps1}) and (\ref{dhmp1}), we have that (\ref{pss}) is equivalent to
\begin{align*}
&\frac{-(x+\alpha+1)(x+\alpha+\beta+1)R_{n-1}^{\alpha+1,\beta,N-1}(\lambda^{\alpha+1,\beta}(x))}
{(\alpha+1)(2x+\alpha+\beta+1)}
\\ &\hspace{2cm}+\frac{x(x+\beta)R_{n-1}^{\alpha+1,\beta,N-1}(\lambda^{\alpha+1,\beta}(x-1))}{(\alpha+1)(2x+\alpha+\beta+1)}=
R_n(\lambda(x))-R_n^{\alpha,\beta,N-1}(\lambda(x)).
\end{align*}
Notice that since the left and right hand terms in this identity are polynomials in $N$ and rational functions in $x$, it will be enough to prove it for $x\in \NN$ and $N\not \in \NN$. Hence, taking into account the generating function (\ref{dhmp3}) for the dual Hahn polynomials, this identity is equivalent to
\begin{align*}
&-\frac{(x+\alpha+1)(x+\alpha+\beta+1)\pFq{2}{1}{-x,-x-\beta}{\alpha+2}{t}}{(\alpha+1)(2x+\alpha+\beta+1)}
\\
&\hspace{2cm} +\frac{x(x+\beta)(1-t)\pFq{2}{1}{-x+1,-x+1-\beta}{\alpha+2}{t}}{(\alpha+1)(2x+\alpha+\beta+1)}
=-\pFq{2}{1}{-x,-x-\beta}{\alpha+1}{t}.
\end{align*}
But this last identity can be checked easily from the power expansion of the hypergeometric function.
\end{proof}

\end{document}